\newcommand{\abs}[1]{\lvert#1\rvert}
\newcommand{\M}{\mathcal{M}}
\newcommand{\floor}[1]{\left\lfloor#1\right\rfloor}
\newcommand{\ceil}[1]{\left\lceil#1\right\rceil}
\newcommand{\frc}[1]{\left\{#1\right\}}
\newcommand{\ud}{\mathrm{d}}
\renewcommand{\v}{\mathbf{v}}
\newcommand{\I}{\mathcal{I}}
\newcommand{\J}{\mathcal{J}}
\newcommand{\jbulk}{\J_1}
\newcommand{\jboundary}{\J_2}
\DeclareMathOperator{\fl}{fl}
\DeclareMathOperator{\re}{Re}
\DeclareMathOperator{\im}{Im}
\newtheorem{theorem}{Theorem}
\newtheorem{proposition}[theorem]{Proposition}
\newtheorem{lemma}[theorem]{Lemma}
\numberwithin{theorem}{section}
\theoremstyle{remark}
\let\@@pmod\pmod
\DeclareRobustCommand{\pmod}{\@ifstar\@pmods\@@pmod}
\def\@pmods#1{\mkern 2mu({\operator@font mod}\mkern 3mu#1)}
\begin{document}

\title[New Computations of $\zeta(1/2+it)$]{New Computations of the Riemann zeta
function on the critical line}
\author[J.W. Bober \and G.A. Hiary]{Jonathan W. Bober \and Ghaith A. Hiary}
\thanks{GH is partially supported by
the National Science Foundation under agreements No.
 DMS-1406190. Both authors are pleased to thank 
 IAS, MSRI, and ICERM, where parts of this work was conducted}
\address{
    JB: Heilbronn Institute for Mathematical Research, School of Mathematics,
    University of Bristol, Howard House, Queens Avenue, Bristol BS8 1SN,
    United Kingdom
}
\email{j.bober@bris.ac.uk}
\address{
    GH: Department of Mathematics, The Ohio State University, 231 West 18th
    Ave, Columbus, OH 43210, USA
}
\email{hiary.1@osu.edu}
\subjclass[2010]{Primary: 11Y35. Secondary: 65Y20.}
\keywords{The Riemann zeta function, large values, exponential sums, the van der Corput iteration, Theta algorithm}

\begin{abstract}
We present highlights of computations of the Riemann zeta function around large
values and high zeros. The main new ingredient in these computations is an
implementation of the second author's fast algorithm for numerically evaluating
quadratic exponential sums.  In addition, we use a new simple multi-evaluation
method to compute the zeta function in a very small range at little more than
the cost of evaluation at a single point.
\end{abstract}

\maketitle

\section{Introduction}

Computations of $\zeta(1/2+it)$ have a long history 
and are of
interest in number theory because of 
fundamental links to the prime numbers. 
(The primes and zeta zeros are
Fourier transforms of each other.) Progress in zeta computations
has paralleled advances in our understanding of zeta, and 
has in many ways traced the development of the computer.

With the exception of Riemann's hand calculation 
in the 1850s, 
which remained unknown to the
outside world for many decades, 
computations of $\zeta(1/2+it)$ 
until 1932 had relied on 
the Euler-Maclaurin formula for approximating sums by integrals. 
This formula enables the numerical evaluation of $\zeta(1/2+it)$ 
with high accuracy, but requires summing at least $t/2\pi$ terms.
Since computations were done manually, 
this limited the feasible range of $t$ to a few hundred.

An important advance came in 1932 when 
Siegel published on a formula that he discovered in Riemann's Nachclass.
This formula, 
now known as the Riemann--Siegel (RS) formula, allows computing 
$\zeta(1/2+it)$ by summing $\lfloor \sqrt{t/2\pi}\rfloor$ terms, far fewer 
than in the Euler-Maclaurin method.
The resulting accuracy is only moderate (i.e.\ to within $\pm t^{-c}$ for various
$c>0$ and large $t$) but still suffices in most numerical investigations. 
Shortly after Siegel's paper, Titchmarsh~\cite{titchmarsh36} reported in 1936 on 
his computation using the RS formula to
 verify the Riemann hypothesis (RH) up to $t=1468$, extending
the previous verification range by severalfold.\footnote{Puzzlingly,
Titchmarsh
stated in \cite{titchmarsh36}
that zeta has $1041$ zeros between $t=0$ and $t=1468$ even though 
there are $1042$ zeros in that range.}

For a long time after that, progress in zeta computations came from 
advances in computing technology rather than faster algorithms. 
This is nicely illustrated
in Turing's effort in the 1930s to build a mechanical computer to calculate
zeta, and
his pioneering use of the electronic computer for that purpose
 two decades later. 
In 1953, Turing~\cite{turing53}
published on his verification of the RH
in the interval $2\pi 63^2< t< 2\pi 64^2$, 
in which the main sum of the RS formula consists of $63$ terms. 
This interval lies significantly higher on the
critical line than in previous computations. 
As remarked in \cite{hejhal-odlyzko},
Turing's consideration of an isolated interval at high $t$
signaled a great insight.
Certainly, Turing was motivated by his skepticism of the RH as, to quote him, 
his ``calculations were done in an optimistic hope that a zero would be
found off the critical line.''
To check the RH, Turing introduced a novel method 
 to prove that a given list of zeta zeros in an interval is complete.
This method is still the state of the art today. See
\textsection{\ref{turing's method}}.

The main algorithmic improvements on the RS formula 
did not arrive until the 1980s, starting with the  
the Odlyzko--Sch\"onhage algorithm~\cite{odlyzko-schonhage-algorithm} 
for multiple evaluations of zeta, and
 the algorithms of Sch\"onhage~\cite{schonhage} and Heath-Brown (see
\cite{hiary}) for evaluation at a single point.
Odlyzko 
implemented the Odlyzko--Sch\"onhage algorithm
and computed large sets of zeros at various heights
in order to test conjectured statistical 
connections between zeta and random matrix theory.
The most extensive of Odlyzko's computations began in 1999 
and finished in  Fall of 2000,
resulting in a dataset of 20 billion
zeros around zero number $10^{23}$.
Gourdon~\cite{gourdon} then computed two billion zeros around the $10^{24}$-th
zero using another implementation of the Odlyzko--Sch\"onhage algorithm. 

In the meantime, the theoretical complexity of computing $\zeta(1/2+it)$ at a
single point became 
of interest in its own right. 
New algorithms by the second author~\cite{hiary,hiary-2} 
have bounded this complexity by 
$t^{1/3+o(1)}$ time and little space, 
and then by $t^{4/13+o(1)}$ time and space. The
purpose of this article is to report on our implementation of the $t^{1/3+o(1)}$ 
algorithm and subsequent computations. In fact, 
a basic implementation of this algorithm had already been completed 
by the authors during March--July of 2010 while resident at
IAS. This was followed by debugging and refinements 
in short periods of availability after that.

Most of our computations were in
small  ``targeted intervals'' where zeta was expected to be
unusually large. 
Our hope  was that
unusual behaviors of the zeros would be discovered in such intervals. 
The heuristic method used to find
these intervals naturally 
produced very large $t$, and so 
most computations
at large $t$ were out of necessity.
This resulted in 
the largest computed value of $Z(t)\approx 16244.8652$ and $S(t)\approx
3.3455$, and in checking the RH for more than $50000$ zeros 
in over $200$ small intervals going up  
to the $10^{36}$-th zero. See \textsection{\ref{examples of computations}}.

Our computations demonstrate that the $t^{1/3+o(1)}$
algorithm is practical. Indeed, almost 
all computations finished in a few days on the ``riemann machine,'' 
a computer cluster at the University of Waterloo.
(The  implementation source code is available on the authors' websites.)
This cluster has 
16 nodes (though we usually limited our use to 
about 12 nodes) and each node has 8 cores at 2.27GHz clock speed.
During initial development of our code, we also used William Stein's
"Sage cluster" at the University of Washington, and in more recent computations
we used the BlueCrystal cluster at the University of Bristol.
Probably, the range of $t$ feasible via the current implementation and 
using easily available computational resources is past the $10^{40}$-th zero.

\section{The Riemann--Siegel formula}\label{rs formula sec}

Let $s=\sigma+it$ where $\sigma$ and $t$ are real numbers. 
The Riemann zeta function $\zeta(s)$ is defined by
\begin{equation}
\zeta(s) :=  \sum_{n=1}^{\infty} n^{-s} \qquad (\sigma > 1).
\end{equation}
It has a meromorphic continuation with a simple pole at $s=1$, 
and satisfies
the functional equation $\xi(s)=\xi(1-s)$ 
where $\xi(s) := \pi^{-s/2}\Gamma(s/2) \zeta(s)$.
The RH is the conjecture that all 
zeros (i.e.\ roots) of $\xi(s)$ lie on the critical line $\re(s)=1/2$.

The RS formula is an asymptotic formula to compute $\zeta(s)$.
The starting point 
is the following identity, proved via the functional equation and general principles 
in complex analysis. Let $\chi(s):=\pi^{s-1/2} \Gamma((1-s)/2)/\Gamma(s/2)$, 
and let
\begin{equation}
\mathcal{R}(s):=\frac{e^{-\pi i s}\Gamma(1-s)}{2\pi i}\int_{C_N}
\frac{z^{s-1}e^{-Nz}
}{e^z-1}dz,
\end{equation}
where $C_N$ is the contour that goes from $+\infty$ to $(2N+1)\pi i$, makes half a circle
of radius $(2N+1)\pi$ around the origin, then returns to $+\infty$. 
(Here, $z^{s-1} = e^{(s-1)\log z}$ where the logarithm is 
real at the beginning of
$C_N$.) Then we have
\begin{equation}
\zeta(s) = \sum_{n=1}^N \frac{1}{n^s}+\chi(s)\sum_{n=1}^N \frac{1}{n^{1-s}}
+  \mathcal{R}(s).
\end{equation}
Riemann analyzed the remainder $\mathcal{R}(s)$ using a saddle-point method,  
and obtained a remarkably simple evaluation as an asymptotic series.\footnote{
Edwards' book~\cite[page 156]{edwards} contains
a copy of a page of Riemann's notes where
he does such calculations. As a curiosity,
we compared with C.L.~Siegel's 1932 paper
``\"Uber Riemanns Nachla\ss zur analytischen Zahlentheorie,'' and it
appears that the 
magnified expression at the bottom of that copy corresponds
to the fourth correction term $C_4(z)$; see \eqref{correction terms sum}.}

Actually, the RS formula is usually stated for
$Z(t):=e^{i\theta(t)}\zeta(1/2+it)$,
which is 
 a rotated version of zeta on the critical line often called the Hardy $Z$-function.
 The phase factor $\theta(t)$ is
a real smooth function, determined via the functional equation, 
that ensures that $Z(t)$
is real for real $t$. 
In particular, 
one can isolate
simple non-trivial zeros of zeta
merely by looking for sign changes of $Z(t)$. 
It is thus desirable to work with 
$Z(t)$ instead of $\zeta(1/2+it)$.
There is no harm in doing so because these functions can be 
recovered easily from each
other. For by Lemma~\ref{theta phase lemma} we have 
\begin{equation}\label{thetat}
\left|\theta(t) - \left(\frac{1}{2}\log\frac{t}{2\pi e}
-\frac{\pi}{8} + \frac{1}{48t}\right)\right| \le \frac{0.129}{t^3}, \qquad (t > 1). 
\end{equation}
So one can compute $\theta(t)$ precisely and quickly for large $t$.
Also, since $Z(t)=Z(-t)$ by the functional equation, 
we may restrict to $t\ge 0$.

The version of the RS formula 
that we used in our computations is the same as in
\cite{rubinstein-computational-methods}. For $t>2\pi$,
let $a:=\sqrt{t/(2\pi)}$, $N:=\lfloor a\rfloor$ the
integer part of $a$, and $z= a-\lfloor a\rfloor$ the 
fractional part of $a$.
Then the RS formula consists of a main sum 
\begin{equation}
\mathcal{M}(t) = \sum_{n=1}^{N} 
\frac{e^{it \log n}}{\sqrt{n}},
\end{equation}
a correction $\mathcal{C}_m(t)$, and a remainder $R_m(t)$.  
Specifically, for each $m\in \mathbb{Z}_{\ge 0}$,
\begin{equation} \label{eq:rsform}
Z(t) = 2\, \Re e^{-i\theta(t)}\mathcal{M}(t) 
+\mathcal{C}_m(t) + R_m(t),
\end{equation}
where $\mathcal{C}_m(t)$ is a sum of $m+1$ terms,
\begin{equation}\label{correction terms sum}
\mathcal{C}_m(t)=\frac{(-1)^{N+1}}{\sqrt{a}}
\sum_{r=0}^m \frac{C_r(z)}{a^r},
\end{equation}
and $R_m(t)$ satisfies the bound $R_m(t)\ll t^{-(2m+3)/4}$. 
The functions $C_r(u)$ on the r.h.s.\ in \eqref{correction terms sum}
are linear combinations of derivatives
of
\begin{equation}\label{F(u) def}
F(u):=\frac{\cos (2\pi(u^2-u-1/16))}{\cos(2\pi u)},
\end{equation}
up to the $3r$-th derivative; see
\cite{gabcke-thesis,berry-riemann-siegel,berry-keating-zeta-method}.\footnote{Slightly different 
expressions for the functions $C_r(z)$ can be found in
\cite{odlyzko-schonhage-algorithm, berry-riemann-siegel} 
and in Siegel's 1932 paper.}
For example, 
$C_0(u) = F(u)$ and $C_1(u) = F^{(3)}(u)/96\pi^2$,
where $F^{(3)}(u)$ is the third derivative of $F(u)$.
The arguments of the cosines in \eqref{F(u) def} are simultaneously
 odd multiples of $\pi/2$, so that poles due to denominator 
 are cancelled by zeros due to the numerator.
Therefore, the function $F(u)$ is entire and even (but not periodic). 
To evaluate $F(u)$, and more generally $C_r(u)$, near removable singularities at 
$u=\pi(2n+1)/2$, we used a numerically stable Taylor expansion.

The remainder $R_m(t)$ in \eqref{eq:rsform} 
is well-controlled when $t$ is large.
Gabcke derived in his thesis~\cite{gabcke-thesis} 
explicit bounds for $R_m(t)$ when $m\le 10$.
For example, if $t\ge 200$, then
$|R_1(t)| < .053 t^{-5/4}$, $|R_4(t)| <  0.017t^{-11/4}$,
and $|R_{10}(t)| < 25966 t^{-23/4}$.
Actually, even the bound for $R_1(t)$ 
will be sufficient in our computations since 
 $t$ will be of size $ > 10^{20}$, 
 so already $|R_1(t)|< 10^{-26}$.

\section{Turing's method}\label{turing's method}

The zeros of $\xi(s)$ are called the non-trivial zeros of zeta. 
They are denoted by $\rho_n=1/2+i\gamma_n$, $n\ne 0$. For example, 
$\rho_1 = 1/2+i 14.134725\ldots$, $\rho_2=1/2+ i 21.022039\ldots$,
 $\rho_3 = 1/2+ i 25.010857\ldots$, and $\rho_{-n}=\overline{\rho_n}$. 
The RH is the statement that the 
ordinates $\gamma_n$ are always real.
The counting function of zeros is 
\begin{equation}
N(t) := |\{0< \im \rho_n < t\}|+\frac{1}{2}|\{\im \rho_n = t\}|.
\end{equation}
Using the functional equation and 
the argument principle from complex analysis,
if $t\ne \im \rho_n$ for any $n$, then
\begin{equation}\label{Nt formula}
N(t) = \frac{1}{\pi} \theta(t)+1 + S(t),
\end{equation}
where $\theta(t)$ and $S(t)$ are defined\footnote{This is the same
$\theta(t)$ appearing in the definition of $Z(t)$.} by a continuous
variation in the argument of 
$\pi^{-s/2}\Gamma(s/2)$ and 
$\zeta(s)$, respectively, as $s$ moves 
along the line segments from $2$, where the argument is defined to be zero, 
to $2+i t$ to $1/2+it$;
see \cite{titchmarsh,davenport-book,edwards}. 
If $t=\im \rho_n$ for some $n$, then we define 
$S(t):= \lim_{\epsilon \searrow 0} \frac{1}{2}(S(t+\epsilon)+S(t-\epsilon))$. 

As mentioned earlier, $\theta(t)$ can be computed easily; it increases
roughly linearly with no unpredictable oscillations. 
Therefore, in view of formula \eqref{Nt formula}, 
the main difficulty is to compute $S(t)$,
for which we employ Turing's method~\cite{turing53}.
This is a particularly attractive method as  
 it uses the  already computed list 
of  zeros together with a few evaluations of $Z(t)$. 
Basically, the value of $S(t)$ is determined at two points $t_2>t_1$,
which in turn determines $N(t_2)-N(t_1)$.\footnote{We
assume that $Z(t_1)$ and $Z(t_2)$ are nonzero.}
If the number of zeros found in $[t_1,t_2]$ 
matches this difference, 
then the completeness of the zeros list is verified,
and we are in a position to 
compute $S(t)$ throughout $[t_1,t_2]$.
It is thus 
clear that the main issue is to find
such $t_1$ and $t_2$.

To this end, Turing first observed that 
if the sign of $Z(t)$ is known, then the value of $S(t)$ is known modulo
$2$. It is therefore not necessary to compute $S(t)$ to any great accuracy.
In fact, as observed in \cite{edwards}, it is advantageous to specialize to
$t$ a good gram point\footnote{The $m$-th gram point $g_m$ is 
the unique solution the equation $\theta(t)= \pi m \in 
\mathbb{Z}_{\ge -1}$ for $t\ge 7$. It is called good 
if $(-1)^m Z(g_m)>0$. One usually finds a good gram point on 
testing $Z(t)$ at few consecutive gram points.}, 
for then $S(t)$ must be an even integer, 
and it suffices to prove that $|S(t)|<2$ in order to conclude that $S(t)=0$.
The basic idea here is that $S(t)$ is small on average, satisfying 
the bound $|\int_t^{t+\Delta} S(y)dy| \le 0.128 \log(t+\Delta)+ 2.30$,
provided that 
$t>168\pi$ and $\Delta >0$. (The constants in this bound have been improved
by Trudgian~\cite{trudgian}.) So if one incorrectly assumes that $S(t)\ge
2$, then, provided that sign changes of the $Z$-function 
are sufficiently regularly spaced around $t$, 
 the average of $S(y)$ over $[t,t+\Delta]$ will contradict 
the required bound once $\Delta$ is large enough (roughly of size $\gg
\log(t+\Delta)$). And therefore one can conclude that $S(t)<2$. 
An analogous argument can be used to prove 
that $S(t) > -2$; see \cite{turing53,edwards} for details.

\section{Examples of computations}\label{examples of computations}

The new methods described in this paper are suitable for evaluation of
$Z(t)$ in short intervals. Accordingly, most of our computations
have focused on evaluating the zeta function high on the critical line at
spots where we might expect interesting behavior. Additionally, we have done
some evaluation at spots where $t$ or $N(t)$ is a nice ``round'' number; in
these spots we expect to see ``typical'' behavior. 

To find points where we expect to see interesting behavior, 
 we used the LLL algorithm~\cite{lll}, as done in \cite{odlyzko-manuscript}, 
 to search for values of $t$ where $p^{it} \approx 1$
for many initial primes $p$.
By multiplicativity, then, there should be unusually many values of
$n^{it}$ close to $1$, making the initial segment of the main sum large. 
Though
we have not attempted to make this argument rigorous, it works well in practice,
and we have observed many values of $Z(t)$ which are much larger than average
by using LLL to line up the values of just one hundred or so initial $p^{it}$.

As a byproduct of our search for large values, we also find large values of
$S(t)$. 
It is always the case in our computations that when $\zeta(1/2 + it)$
is very large there is a large gap between the zeros around the large
value. 
And it seems that to compensate for
this large gap the zeros nearby get ``pushed'' to the left and right. A typical
trend in the large values that we have found is that $S(t)$ is particularly
large and positive before the large value and large and negative afterwards.
This behavior can be seen in the plots in Figures \ref{fig-large-S} and
\ref{fig-large-Z}.

As a consequence on the Riemann Hypothesis, $\zeta(1/2 + it)$ is known to grow
slower than any fixed power of $t$. Currently, the best conditional upper bound is
\begin{equation}\label{chandee-sound}
    \abs{\zeta(1/2 + it)} \ll \exp\left(\frac{\log 2}{2} \frac{\log t}{\log\log t} +
        O\left(\frac{\log t \log\log\log t}{(\log\log t)^2}\right)\right),
\end{equation}
a result of Chandee and Soundararajan \cite{chandee-sound} improving the leading constant
in earlier results. On the other hand,
Bondarenko and Seip \cite{bondarenko-seip} have recently shown unconditionally
that there exist
values of $t$ for which
\[
    \abs{\zeta(1/2 + it)} > \exp\left( \left(\frac{1}{\sqrt 2} + o(1)\right)
            \sqrt{\frac{\log t \log\log\log t}{\log\log t}}\right),
\]
improving on previous results by a factor of $\sqrt{\log\log\log t}$. Where
exactly in this range the largest values lie is still an open question, though
a detailed conjecture of Farmer, Gonek, and Hughes \cite{farmer-gonek-hughes}
suggests that the largest values are of size
\begin{equation}\label{fgh}
    \exp\left(\big(1 + o(1)\big)\sqrt{\frac{1}{2}\log t \log\log t}\right).
\end{equation}

Table \ref{table-large-Z}
has a list of the 12 local maxima of $|Z(t)|$ larger than $10000$ 
that we have found to date.
We do not attempt further 
here to shed light on conjectures regarding the
maximal size of $|\zeta(1/2 + it)|$, but focus on presenting some raw data. 
Additionally,
it seems unlikely that the values that we have found are close to as
large as possible. These extreme values are very rare, and cannot be found by
random search, so we find them using one specialized method, 
but there could be other methods.

Bounds for the growth of $S(t)$ are similar to the bounds for the growth of
$\log\abs{\zeta(1/2 + it)}$. Again assuming the Riemann Hypothesis,
Montgomery established in \cite{montgomery} 
that there are values of $t$ for which
\begin{equation}
    |S(t)| \gg \left(\frac{\log t}{\log\log t}\right)^{1/2}.
\end{equation}
Goldston and Gonek proved that $|S(t)| \le (1/2+o(1))\log t/\log\log t$, 
and the constant was improved to $1/4$ in 
\cite{carneiro-chandee-milinovich}.
Even unconditionally, $|S(t)|$ is known to be unbounded 
by results of Selberg. Nevertheless, previous to
these computations, the largest observed value of $S(t)$ seems to have been
$-2.9076$, as reported by Gourdon~\cite{gourdon}. Table \ref{table-large-S} lists
11 spots where we have found values of $|S(t)| > 3.1$, the largest of which
is $S(t) \approx 3.3455$ for $t \approx 7.75 \times 10^{27}$.

In addition to computation at points where we expect $\zeta(1/2+it)$ to be large,
we have computed $Z(t)$ for some arbitrarily chosen values of $t$. Often
these values have either $t$ or $N(t)$ close to some nice round number
in base ten, and the computations highest in the critical strip are for
$$t\approx 81029194732694548890047854481676712.98790,$$ 
the imaginary part of zero number 
$n=10^{36} + 42420637374017961984$.

The data is just meant to be a sample of what we have computed. 
Further examples of computations can be found
on the authors' websites.
All together,
we have checked the Riemann Hypothesis for more than 50000 zeros in over 200
separate small intervals with values of $t$ ranging from $10^{24}$ to $8 \times
10^{34}$. 

\renewcommand\arraystretch{1.2}
\begin{table}
\small
\begin{tabular}{|r|r|} \hline \multicolumn{1}{|c|}{$t$} & \multicolumn{1}{c|}{$Z(t)$} \\ \hline
 $39246764589894309155251169284104.0506$ & $16244.8652$ \\
 $70391066310491324308791969554453.2490$ & $-14055.8928$ \\
 $552166410009931288886808632346.5052$ & $-13558.8331$ \\
 $35575860004214706249227248805977.2412$ & $13338.6875$ \\
 $6632378187823588974002457910706.5963$ & $12021.0940$ \\
 $698156288971519916135942940460.3337$ & $11196.7919$ \\
 $289286076719325307718380549050.2563$ & $10916.1145$ \\
 $50054757231073962115880454671617.4008$ & $-10622.1763$ \\
 $803625728592344363123814218778.1993$ & $10282.6496$ \\
 $690422639823936254540302269442.4854$ & $10268.7134$ \\
 $1907915287180786223131860607197.5463$ & $10251.5994$ \\
 $9832284408046499500622869540131.7445$ & $-10138.5908$ \\ \hline
\end{tabular}
\vspace{.1in}
\caption{All local maxima of $|Z(t)| > 10000$ found by our computations}
\label{table-large-Z}
\end{table}

\begin{table}
\small
\begin{tabular}{|r|r|} \hline \multicolumn{1}{|c|}{$t$} & \multicolumn{1}{c|}{$S(t)$} \\ \hline
 $7757304990367861417150213053.6386$ & $3.3455$ \\
 $546577562321057124801498516819.4609$ & $-3.2748$ \\
 $35575860004214706249227248805976.9763$ & $3.2722$ \\
 $31774695316763918183637654364.8066$ & $3.2573$ \\
 $11580026442432493576924087062.5414$ & $-3.2371$ \\
 $10758662450340950434456735185.3359$ & $-3.2261$ \\
 $50054757231073962115880454671617.8419$ & $-3.1826$ \\
 $50054757231073962115880454671617.8419$ & $-3.1826$ \\
 $39246764589894309155251169284103.7774$ & $3.1694$ \\
 $10251393160473423776137882271.3031$ & $3.1660$ \\
 $77590565202125505656738011641.6876$ & $3.1431$ \\ \hline
\end{tabular}
\vspace{.1in}
\caption{Spots where $|S(t)| > 3.1$ found by our computations. In this table, $t$ actually
denotes the imaginary part of a zero of $\zeta(s)$, so the value of $S(t)$ is attained
just before (if negative) or after this zero.}
\label{table-large-S}
\end{table}

\begin{table}
\small
    \begin{tabular}{|l|r|} \hline
        \multicolumn{1}{|c|}{$n$} & \multicolumn{1}{c|}{$\gamma_n$} \\ \hline
        $10^{25}$ & $1194479330178301585147871.32909$ \\
        $10^{26}$ & $11452628915113964213507127.18757$ \\
        $10^{27}$ & $109990955615748542241920621.36163$ \\
        $98094362213058141112271181439$ & $10^{28} + 0.00366$ \\
        $10^{29}$ & $10191135223869807023206505980.23860$ \\
        $10^{30} + 484$ & $98297762869274424758690514889.09764$ \\
        $1017590402074552798166351185765$ & $10^{29} + 0.07316$ \\
        $10^{31}$ & $949298829754554964058786559878.40484$ \\
        $10^{32}$ & $9178358656494989336431259004805.28194$ \\
        $10^{33}$ & $88837796029624663862630219091104.93992$ \\
        $10^{36} + 42420637374017961984$ & $81029194732694548890047854481676712.98790$ \\
        \hline
    \end{tabular}
    \vspace{.1in}
    \caption{
        Some examples of zeros of $\zeta(1/2 + it)$. Here the notation means
        that $\zeta(1/2 + i\gamma_n) = 0$ for some real number $\gamma_n$
        within about $10^{-5}$ of the number in the right column, and that
        there are exactly ${n-1}$ zeros in of $\zeta(s)$ in the critical
        strip with positive imaginary part $< \gamma_n$.}
    \label{table-round-points}
\end{table}

\clearpage
\begin{sidewaysfigure}
\vspace{120mm}
    \makebox[\textwidth][c]{\includegraphics[scale=.5]{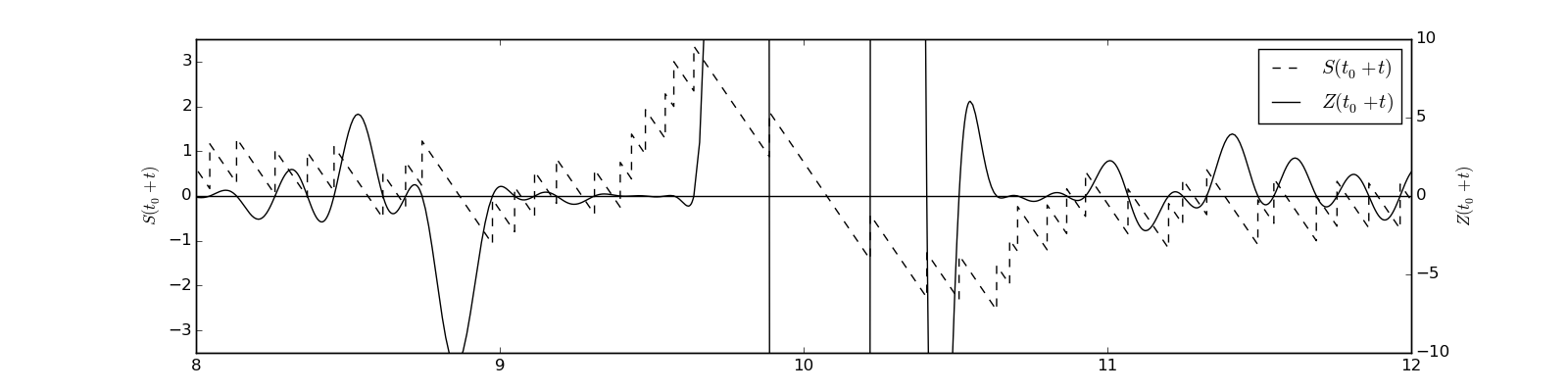}}
    \makebox[\textwidth][c]{\includegraphics[scale=.5]{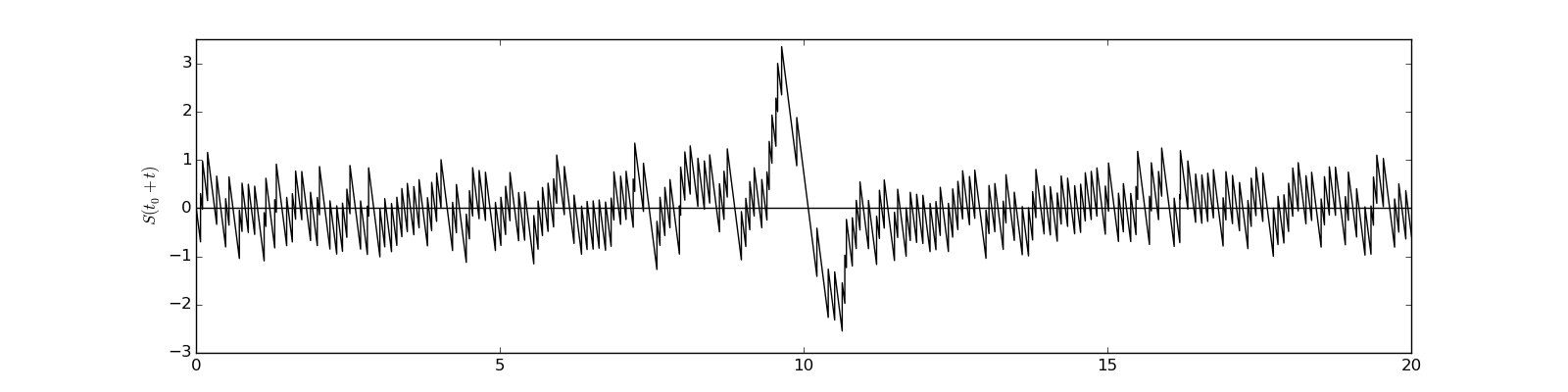}}
    \makebox[\textwidth][c]{\includegraphics[scale=.5]{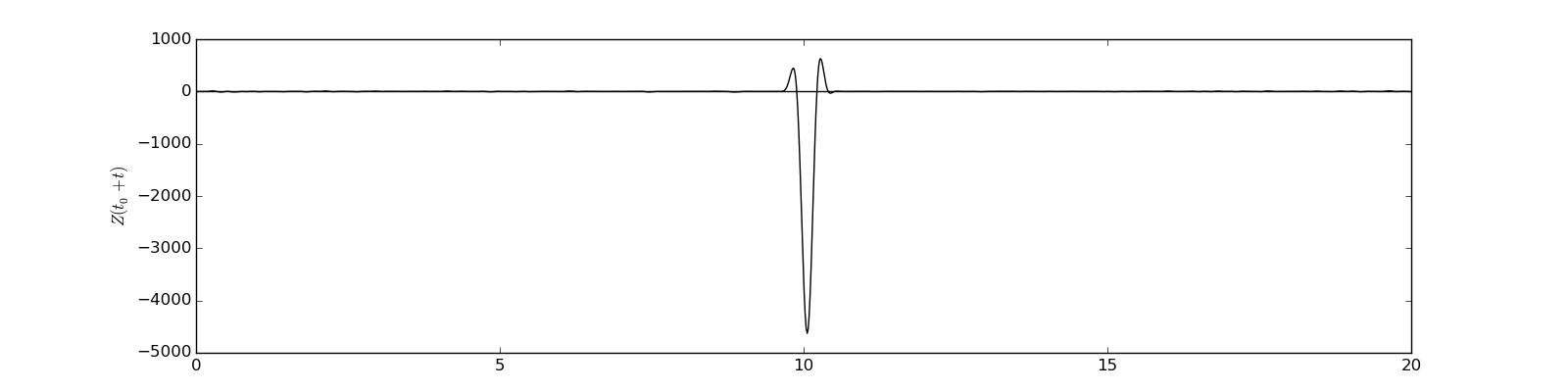}}
    \caption{$Z(t)$ and $S(t)$ around the largest value of $S(t)$ we have found. Here $t_0 = 7757304990367861417150213044$.}\label{fig-large-S}
\end{sidewaysfigure}

\clearpage
\begin{sidewaysfigure}
\vspace{120mm}
    \makebox[\textwidth][c]{\includegraphics[scale=.5]{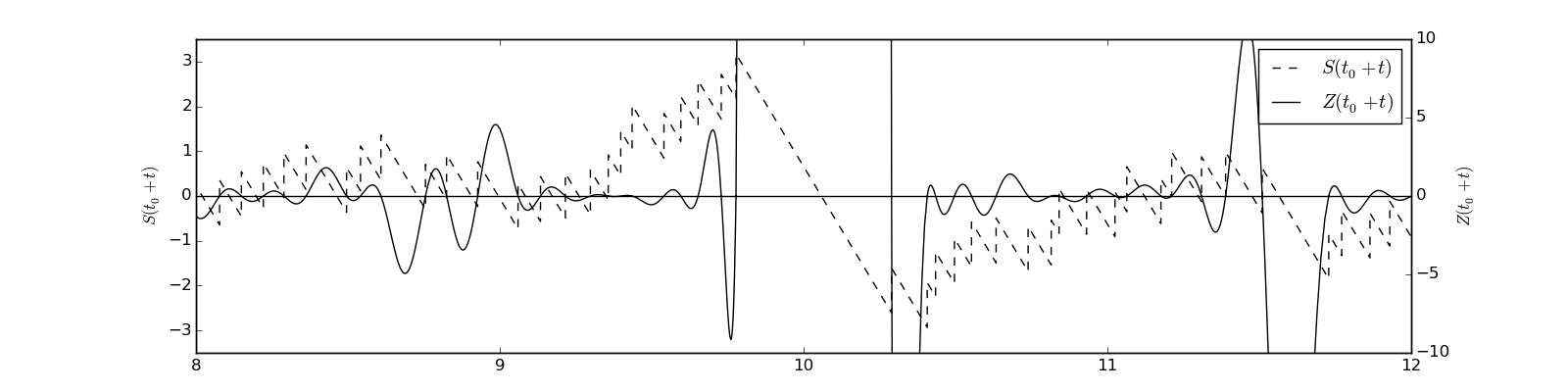}}
    \makebox[\textwidth][c]{\includegraphics[scale=.5]{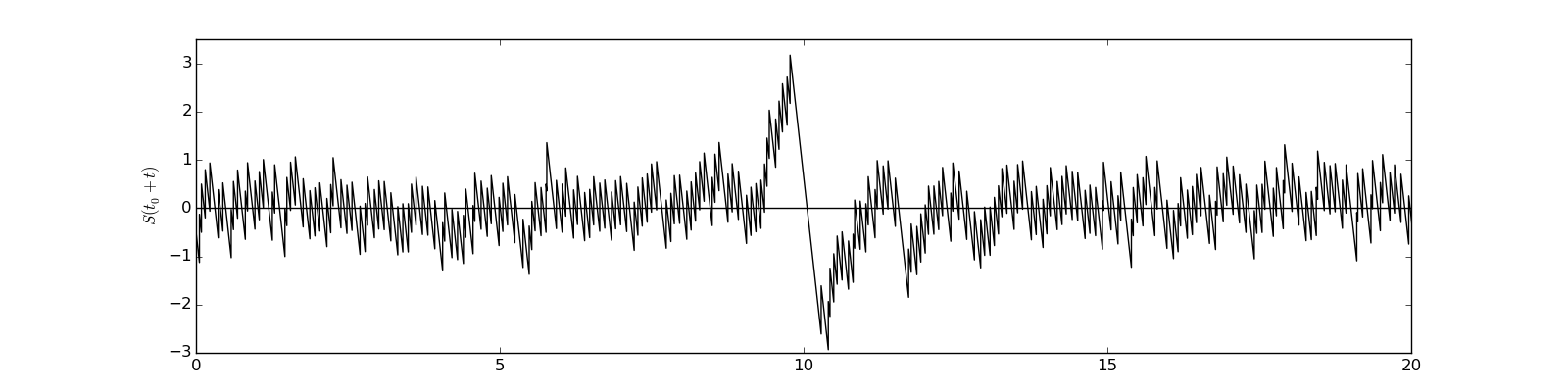}}
    \makebox[\textwidth][c]{\includegraphics[scale=.5]{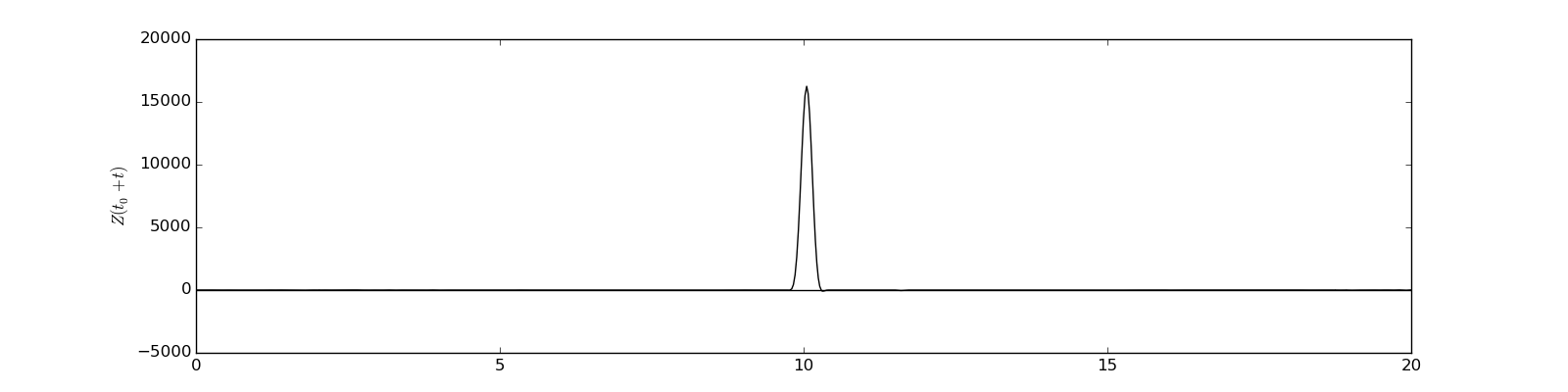}}
    \caption{$Z(t)$ and $S(t)$ around the largest value of $Z(t)$ we have found.
             Here $t_0 = 39246764589894309155251169284094$.}\label{fig-large-Z}
\end{sidewaysfigure}

\clearpage
\begin{sidewaysfigure}
\vspace{120mm}
    \makebox[\textwidth][c]{\includegraphics[scale=.5]{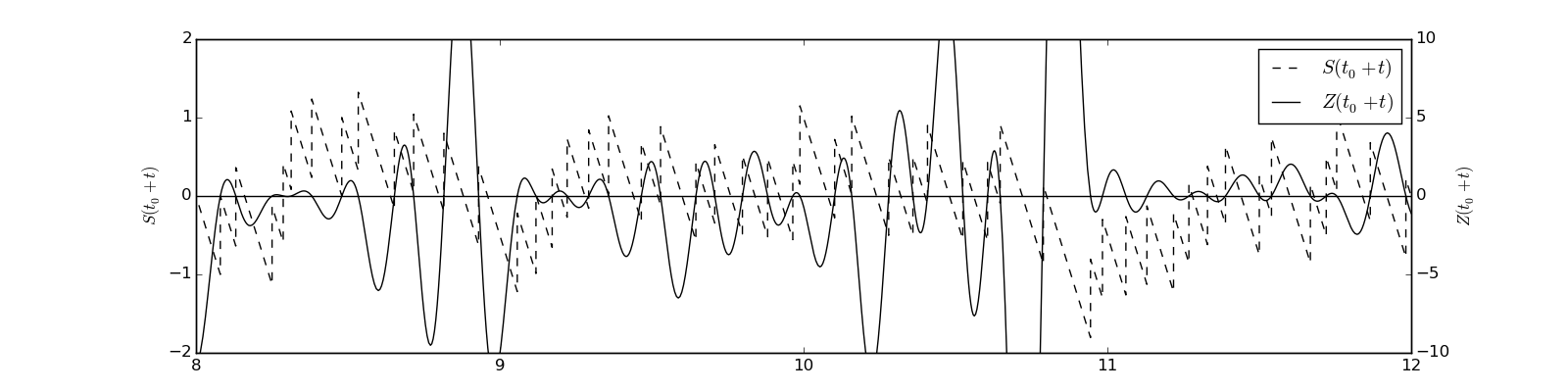}}
    \makebox[\textwidth][c]{\includegraphics[scale=.5]{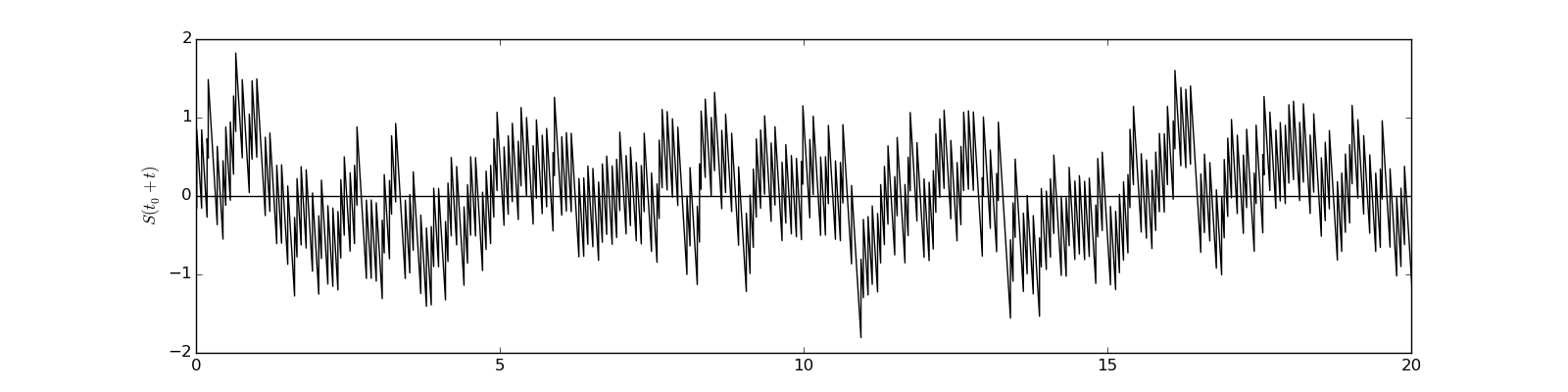}}
    \makebox[\textwidth][c]{\includegraphics[scale=.5]{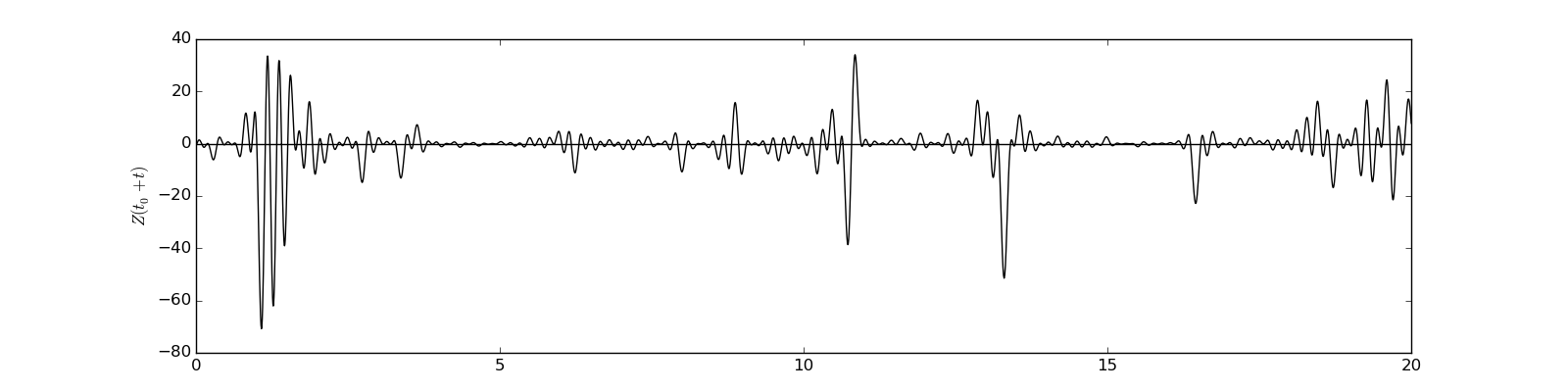}}
    \caption{$Z(t)$ and $S(t)$ around the zero number $1000000000000000042420637374017961984 > 10^{36}$,
        where $t_0 = 81029194732694548890047854481676703$.
    In contrast to Figures \ref{fig-large-S} and \ref{fig-large-Z}, there is nothing particularly special about
this range of $t$, so we should expect this to exhibit typical behavior of $S(t)$ and $Z(t)$.}
    \label{fig-large-t}
\end{sidewaysfigure}

\clearpage
\section{Reducing to quadratic exponential sums} \label{zeta theta link}

To start, we choose a positive integer $v_0 \le N+1$ 
and a real number $u_0>1$. 
Then we construct the sequences 
\begin{equation}\label{seq def}
K_r= \lceil v_r/u_0\rceil,\qquad 
v_{r+1}= v_r + K_r, \qquad (0\le r< R), 
\end{equation}
where
$R$ is the largest integer such that $v_R < N+1$.
We define 
\begin{equation}
K_R :=\min\{\lceil v_R/u_0\rceil,N+1-v_R\}.
\end{equation}
So $v_{R+1} = N+1$, which
is the first point outside
 the range of summation of the main sum $\mathcal{M}(t)$. 
Given the sequences \eqref{seq def}, we subdivide $\mathcal{M}(t)$
into an initial sum of $v_0-1$ terms, followed by
 $R+1$ consecutive blocks where
the $r$-th block starts at $v_r$ and consists of $K_r$ consecutive
terms.
We use the Taylor expansion 
to express the $r$-th block 
as a linear combination of the quadratic exponential sums
\begin{equation}
F(K,j;a,b)=\frac{1}{K^j}\sum_{k=0}^{K-1} k^j e^{2\pi i a k+2\pi i b k^2},\qquad j\in \mathbb{Z}_{\ge 0}.
\end{equation}
(We treat $k^j$ as $1$ when $k = j = 0$.) To this end, we define 
\begin{equation}
f_s(z) := \frac{e^{(s-1/2)(z- z^2/2)}}{(1+z)^s},\qquad  f_s(0)=1.
\end{equation}
For $n\in [v_r,v_r+K_r)$, we write $n=v_r+k$ where $0\le k<K_r$. 
Then, letting $s^* = 1/2-it$ and noting that 
\begin{equation}
\frac{e^{it\log n}}{\sqrt{n}}  =  
\frac{e^{it\log v_r}}{\sqrt{v_r}} e^{itk/v_r-itk^2/2v_r^2} f_{s^*}(k/v_r),
\end{equation}
we obtain, on expanding $f_{s^*}(z)$ around $0$, that 
\begin{equation}\label{taylor exp}
\sum_{v_r\le n<v_r+K_r} \frac{e^{it\log n}}{\sqrt{n}}
=\frac{e^{it\log v_r}}{\sqrt{v_r}}
\sum_{j=0}^{\infty} 
c_r(j) F(K_r,j;a_r,b_r),
\end{equation}
where the linear and  quadratic arguments $a_r$ and $b_r$
are given by the formulas
\begin{equation}\label{coeff def}
a_r := \frac{t}{2\pi v_r},\qquad b_r :=-\frac{t}{4\pi v_r^2}.
\end{equation} 
The coefficients $c_r(j)$ are given by 
\begin{equation}\label{cj def}
c_r(j) := \left(\frac{K_r}{v_r}\right)^j\frac{f_{s^*}^{(j)}(0)}{j!},
\end{equation}
where $f_s^{(j)}(z)$ is the $j$-th derivative in $z$. 
For instance, 
\begin{equation}
\begin{split}
\frac{f_s^{(1)}(z)}{1!} = -\frac{1}{2},\qquad \frac{f_s^{(2)}(z)}{2!}= \frac{3}{8},
\qquad  \frac{f_s^{(3)}(z)}{3!}= -\frac{1}{48}(16s+7).
\end{split}
\end{equation}
Note that, in order to avoid clutter, 
we suppressed dependence on $t$, $u_0$, and $v_0$
in all of $c_r(j)$, $a_r$, $b_r$, and $R$.

Let $J$ denote a positive integer, 
and let $\mathbf{c}_r=(c_r(0),\ldots,c_r(J))$ denote 
a tuple of the first $J+1$ coefficients. Define
\begin{equation}
\mathbf{F}(K,\mathbf{c};a,b)= \sum_{j=0}^J c(j) F(K,j;a,b),
\end{equation}
which is a linear combination of quadratic sums.
We bound the truncation error $\epsilon_J(t)$ 
in the main sum
when the series on the r.h.s.\ of \eqref{taylor exp} is stopped after
$J+1$ terms for each block.
That is, $\epsilon_J(t)$ satisfies 
\begin{equation}\label{quadratic sum formula}
\mathcal{M}(t) = \sum_{n=1}^{v_0-1} \frac{e^{it\log n}}{\sqrt{n}} 
+ \sum_{r=0}^R \frac{e^{it\log v_r}}{\sqrt{v_r}} 
\mathbf{F}(K_r,\mathbf{c}_r;a_r,b_r)+ \epsilon_J(t),
\end{equation}
The following proposition gives bounds on the size of $\epsilon_J(t)$
for various $J$ and $t$.
The proof of this proposition is in \textsection{\ref{proofs}}. 

\begin{proposition}\label{trunc error proposition}
Let $\alpha = 1/0.9$, $K_{\min} = 2000$.
If $v_0 = \lceil K_{\min} t^{1/3}\rceil \le \frac{1}{3}\sqrt{t/(2\pi)}$ say, 
and $u_0= \alpha |s|^{1/3}\ge 1$, then
$\epsilon_J(t)$ is bounded as in the following table.
\begin{table}[h]
\small
\label{eps J table}
\begin{tabular}{|l|l| l| l| l| l|}
\hline
\backslashbox[1cm]{$t$}{$J$} &
 \multicolumn{1}{|c|}{$18$} &
 \multicolumn{1}{c|}{$21$} &
 \multicolumn{1}{c|}{$24$} &
 \multicolumn{1}{c|}{$27$} &
 \multicolumn{1}{c|}{$30$} \\
\hline
$10^{24}$ & $0.000831893$ & $0.0000386099$ & $0.0000108755$ & $7.46138\times 10^{-6}$ & $5.42799\times 10^{-6}$\\
$10^{26}$ & $0.00283154$ & $0.0000925556$ & $7.50489\times 10^{-6}$ & $3.85022\times 10^{-6}$ & $2.76739\times 10^{-6}$\\
$10^{28}$ & $0.00839067$ & $0.000256977$ & $9.10602\times 10^{-6}$ & $1.82102\times 10^{-6}$ & $1.21035\times 10^{-6}$\\
$10^{30}$ & $0.0230403$ & $0.000698646$ & $0.0000196934$ & $1.1228\times 10^{-6}$ & $4.96568\times 10^{-7}$\\
$10^{32}$ & $0.0603446$ & $0.00182714$ & $0.0000495358$ & $1.44903\times 10^{-6}$ & $2.13244\times 10^{-7}$\\
$10^{34}$ & $0.15309$ & $0.00463399$ & $0.000124900$ & $3.12046\times 10^{-6}$ & $1.36063\times 10^{-7}$\\
$10^{36}$ & $0.37952$ & $0.011489$ & $0.00030938$ & $7.53404\times 10^{-6}$ & $1.89985\times 10^{-7}$\\
\hline
\end{tabular}
\vspace{3mm}
\caption{Bounds on $\epsilon_J(t)$ for various $J$ and $t$.}
\end{table}
\end{proposition}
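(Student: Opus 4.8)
The plan is to bound $\epsilon_J(t)$ one block at a time. Combining \eqref{taylor exp} with \eqref{quadratic sum formula} exhibits the truncation error as
\[
\epsilon_J(t)=\sum_{r=0}^{R}\frac{e^{it\log v_r}}{\sqrt{v_r}}\sum_{j>J}c_r(j)\,F(K_r,j;a_r,b_r),
\]
so $|\epsilon_J(t)|\le\sum_{r=0}^R v_r^{-1/2}\sum_{j>J}|c_r(j)|\,|F(K_r,j;a_r,b_r)|$. For the quadratic sums I would use only the trivial estimate: by the triangle inequality and $\sum_{k=0}^{K-1}k^j\le\int_0^K x^j\,\mathrm dx$,
\[
|F(K,j;a,b)|\le\frac1{K^j}\sum_{k=0}^{K-1}k^j\le\frac{K}{j+1}.
\]
For the weights, $K_r/v_r=\lceil v_r/u_0\rceil/v_r\le 1/u_0+1/v_0=:q$, and since the $v_r$ increase with ratios $v_{r+1}/v_r=1+K_r/v_r\le 1+q$, for $v_r\le n<v_{r+1}$ one has $v_r^{-1/2}\le(1+q)^{1/2}n^{-1/2}$; as the blocks partition $\{v_0,\dots,N\}$,
\[
\sum_{r=0}^{R}\frac{K_r}{\sqrt{v_r}}\le(1+q)^{1/2}\sum_{n=v_0}^{N}n^{-1/2}\le 2(1+q)^{1/2}\sqrt N\le 2(1+q)^{1/2}\Big(\tfrac{t}{2\pi}\Big)^{1/4}.
\]
Under the stated hypotheses $q$ has order $t^{-1/3}$, so the factor $(1+q)^{1/2}$ is harmless.

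The heart of the matter is a sharp enough bound on the Taylor coefficients $c_r(j)=(K_r/v_r)^j\,f_{s^*}^{(j)}(0)/j!$ from \eqref{cj def}. I would use the closed form
\[
f_{s^*}(z)=(1+z)^{-1/2}\exp\!\Big(it\big(\log(1+z)-z+\tfrac12 z^2\big)\Big),
\]
which is analytic in $|z|<1$ (its only singularity is the branch point at $z=-1$), the crucial feature being that the ``expensive'' factor $\log(1+z)-z+\tfrac12 z^2=\sum_{m\ge3}\tfrac{(-1)^{m-1}}{m}z^m$ starts at order three. On the circle $|z|=\rho$ with $0<\rho<1$ one has $|1+z|^{-1/2}\le(1-\rho)^{-1/2}$ and $|\log(1+z)-z+\tfrac12z^2|\le\sum_{m\ge3}\rho^m/m=-\log(1-\rho)-\rho-\tfrac12\rho^2$, so, using $|e^{itw}|=e^{-t\,\im w}\le e^{t|w|}$ for real $t$,
\[
M(\rho):=\max_{|z|=\rho}|f_{s^*}(z)|\le(1-\rho)^{-1/2}\exp\!\Big(t\big(\!-\log(1-\rho)-\rho-\tfrac12\rho^2\big)\Big).
\]
Cauchy's estimate then gives $|f_{s^*}^{(j)}(0)/j!|\le M(\rho)\rho^{-j}$, hence $|c_r(j)|\le M(\rho)(q/\rho)^j$ for every $\rho\in(0,1)$.

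Assembling the three ingredients, for any $\rho$ with $q<\rho<1$ the inner sum is geometric and
\[
|\epsilon_J(t)|\le\frac{2(1+q)^{1/2}}{J+2}\Big(\tfrac{t}{2\pi}\Big)^{1/4}\,\frac{M(\rho)\,(q/\rho)^{J+1}}{1-q/\rho}.
\]
To produce the table I would then optimise $\rho$ numerically for each pair $(t,J)$. Heuristically $M(\rho)=e^{t\,\Theta(\rho^3)}$ competes with $\rho^{-(J+1)}$, so the near-optimal choice is $\rho\asymp((J+1)/t)^{1/3}$; there $q/\rho\approx\alpha^{-1}(J+1)^{-1/3}$, which is well below $1$ for $J\ge 18$ because $q\approx u_0^{-1}=\alpha^{-1}|s|^{-1/3}$ and $|s|>t$, so the geometric tail genuinely decays and the bound has size roughly $(e^{1/3}\alpha^{-1}(J+1)^{-1/3})^{J+1}$ times the mild factor $t^{1/4}/(J+2)$. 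This accounts both for the rapid decay along the rows of the table (increasing $J$) and for the comparatively slow growth down the columns (increasing $t$).

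The step I expect to be the main obstacle is not the structure but the constants — squeezing the explicit bound down to the tabulated figures. The factor $M(\rho)=e^{t\,\Theta(\rho^3)}$ is intrinsic to a circular contour (its exponent is essentially $t$ times the size of the cubic term $z^3/3$), and it pins $\rho$ to the scale $t^{-1/3}$, which is exactly the scale of $q\approx u_0^{-1}$; that is why the argument is sensitive to the numerical choice $\alpha=1/0.9$, the slight inflation of $u_0$ beyond $|s|^{1/3}$ being what keeps $q/\rho$ comfortably bounded away from $1$. If the crude inputs $|\im(\log(1+z)-z+\tfrac12 z^2)|\le|\log(1+z)-z+\tfrac12 z^2|$ and $|F(K,j;a,b)|\le K/(j+1)$ leave too much slack, one would sharpen them: the first by evaluating $\max_{|z|=\rho}\im(\cdot)$ exactly (for such small $\rho$ it is governed by the single term $\tfrac13\rho^3|\sin 3\varphi|$), and the second by exploiting the genuine oscillation of the quadratic sums, whose quadratic argument obeys $|b_r|K_r^2\asymp t^{1/3}$, making $F(K_r,j;a_r,b_r)$ in fact far smaller than $K_r/(j+1)$ — though in practice the robust bound may already suffice once all sub-leading terms are retained and $\rho$ is tuned entry by entry.
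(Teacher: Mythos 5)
Your overall architecture — expand block by block, separate a coefficient bound from a bound on the sum over blocks, and control the Taylor coefficients by Cauchy's estimate — matches the paper's, and your treatment of the block sum is a legitimate, more elementary alternative to the paper's: the paper bounds $\sum_r F_{\max}(K_r;a_r,b_r)/\sqrt{v_r}\ll t^{1/6}\log t$ via the Weyl--van der Corput and Kusmin--Landau lemmas (Lemma~\ref{trunc error lemma 2}), whereas your trivial estimate $|F(K,j;a,b)|\le K/(j+1)$ gives a $t^{1/4}/(J+2)$-type bound; in the stated range of $t$ and $J$ these two are within a factor of about $2$ of each other, so this is not where the difficulty lies.

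The genuine gap is in the coefficient bound. Applying Cauchy's estimate directly to $f_{s^*}$ pins you to $\rho\asymp((J+1)/t)^{1/3}$, because $\max_{|z|=\rho}|f_{s^*}(z)|$ really does contain the factor $e^{t\rho^3/3}$ (the imaginary part of the cubic term attains $-\rho^3/3$ somewhere on the circle, so your proposed refinement of evaluating $\max_{|z|=\rho}\im(\cdot)$ exactly gains nothing). At the optimal $\rho$ this yields, after multiplying by $q^{j}$, roughly $\sqrt{2\pi j/3}\,\big/\big((j/3)!\,(3\alpha^3)^{j/3}\big)$ for \emph{every} $j>J$; in particular it treats the coefficients with $j\not\equiv 0\pmod 3$ as though they were as large as those with $3\mid j$. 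The paper instead factors $f_{s^*}(z)=e^{-s^*z^3/3}\cdot\big(e^{s^*z^3/3}f_{s^*}(z)\big)$, applies Cauchy only to the second factor with the much larger radius $\rho=|s|^{-1/4}$ (its maximum on that circle is $e^{\lambda(s)}=O(1)$ rather than $e^{\Theta(t\rho^3)}$), and keeps the expansion $\sum_h(-s^*/3)^hz^{3h}/h!$ of the first factor exact. This shows that the coefficients with $j\equiv1,2\pmod 3$ carry extra negative powers of $|s|$ and are negligible, so the tail effectively begins at $j=3\lceil(J+1)/3\rceil$, and it also removes the Stirling slack $\sqrt{2\pi j/3}$. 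The combined loss in your version is a factor of order $10^2$: for $J=18$ and $t=10^{36}$ your route gives a bound of about $50$ where the table asserts $0.38$. Since the proposition \emph{is} the table, the numbers cannot be reached without this factorization step, and neither of the sharpenings you propose closes the gap.
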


The bounds in Table~\ref{eps J table} are calculated as $|\epsilon_J(t)|\le
c_{\max}(J,t)\mathcal{M}_{\max}(t)$ where $c_{\max}(J,t)$ is essentially 
a bound on the truncation error after $J$ terms 
in the Taylor expansion of $f_s(z)$
at $z=0$,  and $\mathcal{M}_{\max}(t)$ is a
bound on the sum of the $R+1$ blocks.
The bound 
$\mathcal{M}_{\max}(t)$ that we proved is of the form $\ll t^{1/6}\log t$,
which we know is a significant overestimate, 
and is in fact the main source of inefficiency in Proposition~\ref{trunc error
proposition}.
In practice, $\epsilon_J(t)$ is bounded by 
something like 
$c_{\max}(J,t) \sqrt{\log (N/v_0)}$,
though we cannot prove this. In any case, Table \ref{cmax J table} furnishes bounds
on $c_{\max}(J,t)$ alone. The numbers appearing there should be much 
closer to the true truncation error in our computations. 
Note that $c_{\max}(J,t)$ depends mostly on $J$, and 
that its dependence
on $t$ is weak in comparison.

\begin{table}[h]
\small
\label{cmax J table}
\begin{tabular}{|l|l| l| l| l| l|}
\hline
\backslashbox[1cm]{$t$}{$J$} &
 \multicolumn{1}{|c|}{$18$} &
 \multicolumn{1}{c|}{$21$} &
 \multicolumn{1}{c|}{$24$} &
 \multicolumn{1}{c|}{$27$} &
 \multicolumn{1}{c|}{$30$} \\
\hline
$10^{24}$ & $1.048\times 10^{-8}$ & $4.864\times 10^{-10}$ & $1.370\times
10^{-10}$ & $9.399\times 10^{-11}$ & $6.838\times 10^{-11}$\\
$10^{26}$ & $1.028\times 10^{-8}$ & $3.358\times 10^{-10}$ & $2.723\times
10^{-11}$ & $1.397\times 10^{-11}$ & $1.004\times 10^{-11}$\\
$10^{28}$ & $1.025\times 10^{-8}$ & $3.137\times 10^{-10}$ & $1.112\times
10^{-11}$ & $2.223\times 10^{-12}$ & $1.478\times 10^{-12}$\\
$10^{30}$ & $1.024\times 10^{-8}$ & $3.105\times 10^{-10}$ & $8.751\times
10^{-12}$ & $4.989\times 10^{-13}$ & $2.207\times 10^{-13}$\\
$10^{32}$ & $1.024\times 10^{-8}$ & $3.100\times 10^{-10}$ & $8.404\times
10^{-12}$ & $2.459\times 10^{-13}$ & $3.618\times 10^{-14}$\\
$10^{34}$ & $1.024\times 10^{-8}$ & $3.099\times 10^{-10}$ & $8.353\times
10^{-12}$ & $2.087\times 10^{-13}$ & $9.099\times 10^{-15}$\\
$10^{36}$ & $1.024\times 10^{-8}$ & $3.099\times 10^{-10}$ & $8.345\times
10^{-12}$ & $2.033\times 10^{-13}$ & $5.125\times 10^{-15}$\\
\hline
\end{tabular}
\vspace{3mm}
\caption{Bounds on $c_{\max}(J,t)$ for various $J$ and $t$.}
\end{table}

We consider the sensitivity of the main sum to perturbations in the 
quadratic sums in \eqref{quadratic sum formula}. 
Such perturbations arise from 
the accumulation of roundoff errors when using floating point arithmetic.
If each sum $\mathbf{F}(K_r,\mathbf{c}_r;a_r,b_r)$ in \eqref{quadratic sum
formula} is computed to within $\varepsilon$, say, then 
this introduces a total error 
\begin{equation}
\le \varepsilon \sum_{r=0}^R \frac{1}{\sqrt{v_r}}.
\end{equation}
Choosing $u_0$ and $v_0$ as in Proposition~\ref{trunc error proposition},
and noting $v_{r+1} = v_r+K_r\ge v_r(1+1/u_0)$, 
it is easy to show that this error is $\le 0.05 \varepsilon t^{1/6}$, provided 
that $u_0\ge 200$ and $t\ge
10^{10}$ say. In practice though, this maximal size is never reached. 
Instead, due to pseudorandom nature of roundoff errors, one observes
square-root
cancellation in their sum. So the cumulative error is typically 
\begin{equation}
\le \varepsilon \left(\sum_{r=0}^R \frac{1}{v_r}\right)^{1/2}.
\end{equation}
Thus, under the same assumptions on $u_0$ and $t$ as before, 
the error is $\le 0.0011 \varepsilon$.

\section{Computing quadratic exponential sums}\label{section quadratic sums}

The main new ingredient in our algorithm to compute the zeta function is an
implementation of Hiary's algorithm to compute quadratic exponential sums
$\mathbf{F}(K,\mathbf{c};a,b)$. 
The algorithm runs in $O((J+1)^A\log^A(K/\epsilon))$ bit operations, 
where $\epsilon$ is the desired accuracy and $A\le 3$.
This algorithm was derived in \cite{hiary-2} to 
compute $\zeta(1/2+it)$ in $t^{1/3+o(1)}$ time, which is the method
implemented in our computations.
Currently, the fastest method for computing zeta at a single point 
has asymptotic running time 
$t^{4/13+o(1)}$; see \cite{hiary}.  This method relies on computing
cubic exponential sums instead of quadratic sums.

From a high level point of view, in the typical case the algorithm for quadratic
sums works by
using Poisson summation to write $\mathbf{F}(K,\mathbf{c}; a, b)$ as a combination of a shorter
sum and some integrals which can be calculated to whatever precision we like.
The length of the new sum will be $\floor{a + 2bK}$, and we will be able to
assume that $0 \le b < 1/4$, so the length of the sum decreases quickly.  There
are some cases where this Poisson summation does not work well, but these
correspond precisely to the case where $b$ is very small and we can compute
this sum by using Euler-Maclaurin summation.

From \cite{hiary-2}, we distill Proposition~\ref{proposition-theta-sum}.
    The proof of this proposition is essentially the content of Equations (3.37), (3.38), and (3.39)
    \cite{hiary-2}, though we have modified the notation in some ways,
    and have explicitly written out the result for general $\v$, rather than
    only for $\v = (0, 0, \ldots, 1)$, as is done in \cite{hiary-2}.
    (This implicitly involves changing an order of summation, which causes
    the appearance of $z_j$ and $z'_j$ and makes computation
    more efficient.)
The formula in Proposition~\ref{proposition-theta-sum} is 
fairly complicated 
since, following \cite{hiary-2}, it is completely explicit and 
avoids numerical differentiation.
This ensures better and more robust performance 
in practice. 

The algorithm works by applying 
the formula in Proposition~\ref{proposition-theta-sum} 
repeatedly, 
until either $b \ll 1/K$, in which case the Euler--Maclaurin summation 
is used, or $K\le K_{\min}$, in which case direct summation is used. 
Before each application of the formula, 
the linear argument $a$ is normalized to be in $[0,1]$
and the quadratic argument $b$ is normalized to be in $[0,1/4]$.
This is done using
the periodicity of the complex exponential, conjugation, and 
the identity $(k\pm k^2)/2\equiv 0 \pmod 1$, which enables changing $b$
in steps of $1/2$.
The normalization of $b$ is critical to this algorithm as it 
ensures that the length of the quadratic sum is halved
after each iteration; hence, the total 
number of iterations is $\ll \log K$. 

The algorithm for computing quadratic sums has 
been subsequently implemented by Kuznetsov~\cite{kuznetsov}, but using Mordell integral
identities instead of the Poisson summation and relying on 
numerical differentiation.

\begin{proposition}\label{proposition-theta-sum}
If $K$ is a positive integer, 
$a\ge 0$, and $b >0$, then we have the transformation
\begin{align*}
&\mathbf{F}(K,\mathbf{v};a, b) = 
                    \mathbf{F}(q,\v'; a', b') +\mathbf{R}(K,\v;a,b),
\end{align*}
where 
$q = \floor{a + 2bK}$ is the length of the new quadratic sum,
$a' = a/(2b)$ is the new linear argument, 
and $b' = -1/(4b)$ is the new quadratic argument.
The new coefficient vector $\mathbf{v}'=(v'_0,\ldots,v'_J)$ is defined by
\begin{align*}
&    v'_j = e^{-\frac{i \pi a^2}{2b}} q^j b^{-\frac{j}{2}}
                 \sum_{\ell=j}^J v_{\ell} b^{-\frac{1 + \ell}{2}}
                 K^{-\ell}A_{j,\ell}
                 \sum_{\substack{k \equiv \ell - j \bmod 2 \\ 0 \le k \le \ell
                 -j}}
                 a^k b^{-\frac{k}{2}} B_{\ell-j,k},\\
&A_{j,\ell} = \frac{\ell!}{j!} \pi^{ \frac{j-\ell}{2} } 2^{\frac{j - 3\ell -
1}{2}}
e^{\frac{\pi i}{4}(1 + 3(\ell-j))},\qquad
B_{j,k} = \frac{(-1)^{\frac{k + j}{2}}}{\left(\frac{j-k}{2}\right)!\,k!}
(2\pi)^{\frac{k}{2}} e^{-\frac{3\pi i  k}{4}}.
\end{align*}
The remainder $\mathbf{R}(K,\v;a,b)$ 
is given explicitly by
$$
  \mathbf{R}(K,\v;a,b)=          
  e^{2\pi i aK + 2\pi i bK^2}
  \left[e^{\frac{\pi i }{2}}\sum_{j=0}^J S_j
                    + \frac{1}{2}\sum_{j=0}^J v_j\right]
                    +\frac{v_0}{2}
                    - \delta_{\ceil{a}-1}
                    v'_0.$$
The $S_j$ are defined as follows. Let $z_j = i^j \sum_{\ell=j}^J v_\ell \binom{\ell}{j}$, 
$z'_j = \sum_{\ell=j}^J
v_{\ell} \binom{\ell}{j} 2^{\frac{\ell+1}{2}} e^{\frac{i\pi(\ell+1)}{4}}$,
$\omega = \frc{a + 2bK}$, and
$\omega_1 = \ceil{a} - a$. Then
\begin{align*}
    S_j =\,\,
            &z_j
            \Bigg[
                \I_{\tilde C_1}(K, j, \omega, b)
                - \I_{C_7}(K, j, \omega, b)
                - \jbulk(\omega, b, j, q - \ceil{a}, K) \\
                &+ (-1)^j \I_{C_{9H}}(K, j, 1 - \omega, b)
                + \jboundary(2bK - \omega_1, 1 - \omega, b, j, K)
            \Bigg]\\
    &   
            + v_j
            \Bigg[
                (-1)^{j+1}\jbulk(\omega_1, b, j, q - \ceil{a}, K)
                + (-1)^{j+1}\I_{C_7}(K, j, \omega_1, b) \\
                &+ \I_{C_{9H}} (K, j, 1 - \omega_1, b)
                + (-i)^{j+1} \jboundary (2bK - \omega, 1 - \omega_1, b, j, K)
            \Bigg]\\
    &- z'_j e^{2\pi i a K-2\pi \omega K} \I_{C_{9E}} (K, j, \omega, b).
\end{align*}
Here, the $\mathcal{J}$ terms are given by the integrals  
\begin{align*}
&    \jbulk(\alpha, \beta, j, M, K) = \frac{1}{K^j} \int_0^K t^j \exp(-2 \pi
\alpha t - 2 \pi i \beta t^2)
        \frac{1 - \exp(-2 \pi M t)}{\exp(2 \pi t)-1} \ud t,\\
&    \jboundary(\alpha_1, \alpha_2, \beta, j, K) = \frac{1}{K^j} \int_0^K t^j
\exp(-2\pi \beta t^2)
                \frac{\exp(-2\pi \alpha_1 t) + (-1)^{j+1} \exp(-2 \pi \alpha_2 t)}{\exp(2
                \pi t) - 1} \ud t.
\end{align*}
The $\I$ terms are all integrals of the same integrand along different paths in
the complex plane, which we can write explicitly as integrals over segments of
$\mathbb{R}_{\ge 0}$ as
\begin{align*}    
&    \I_{C_7}(K, j, \alpha, \beta) = 
e^{-\frac{i\pi (j+1)}{4}}\frac{1}{K^j}
                        \int_0^{K\sqrt{2}} t^j \exp\left(-(1 +
                        i)\pi\sqrt{2}\alpha t -
                        2\pi \beta t^2\right) \ud t,\\
&\I_{C_{9H}}(K, j, \alpha, \beta) = \frac{1}{K^j} \int_0^\infty t^j \exp(-2 \pi
\alpha t - 2
\pi i \beta t^2) \ud t,\\
&\I_{C_{9E}}(K, j, \alpha, \beta) = \frac{1}{K^j}\int_0^\infty t^j \exp(-2
\pi(\alpha - i\alpha +
2\beta K + 2i\beta Kt - 4 \pi \beta t^2) dt,\\
&\I_{\tilde{C}_1}(K, j, \alpha, \beta) =  -i e^{-2 \pi \alpha K - 2 \pi i \beta
K^2}
        \int_0^K t^j \exp(2 \pi i \alpha t - 4 \pi \beta K t + 2 \pi i \beta t^2) \ud t.
\end{align*}
\end{proposition}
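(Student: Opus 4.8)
The statement is a van der Corput / Poisson summation transformation for the polynomially weighted quadratic sum, and the plan is to reduce it to the single‑monomial case $\v=(0,\dots,0,1)$ treated in \cite{hiary-2} and then linearize in $\v$. First, interchanging the order of the two sums in the definition of $\mathbf{F}$, write
\[
\mathbf{F}(K,\v;a,b)=\sum_{k=0}^{K-1}P(k/K)\,e^{2\pi i ak+2\pi i bk^2},\qquad P(x):=\sum_{j=0}^{J}v_j x^j.
\]
The first step is to represent the finite sum $\sum_{k=0}^{K-1}\phi(k)$, with $\phi(z)=P(z/K)e^{2\pi i az+2\pi i bz^2}$, by a contour integral of $\phi$ against a kernel whose simple poles at the integers have residues recovering the summand and which, after rotation onto $\R_{\ge0}$, produces the factor $1/(e^{2\pi t}-1)$ appearing in $\J_1$ and $\J_2$ (an Abel--Plana/Poisson‑type identity adapted to a finite interval). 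The residues at $z=0,1,\dots,K-1$ reconstruct the sum, while the difference between ending the range at $K-1$ versus $K$, together with the half‑weight at the endpoints, accounts for the terms $v_0/2$ and $e^{2\pi i aK+2\pi i bK^2}\cdot\frac{1}{2}\sum_j v_j$ and for the overall prefactor $e^{2\pi i aK+2\pi i bK^2}$ multiplying the $S_j$ in $\mathbf{R}(K,\v;a,b)$.

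The second step is to deform the contour across the complex plane, crossing the points where the phase $az+bz^2$ is stationary relative to an integer frequency, namely $z=(n-a)/(2b)$ for $n\in\Z$ with $\ceil a\le n\le a+2bK$. Each crossing contributes a Gaussian (Fresnel) integral; completing the square produces the scalar $e^{-i\pi a^2/(2b)}$, replaces the quadratic argument by $b'=-1/(4b)$ and the linear one by $a'=a/(2b)$, and, after reindexing $n$ by $\ceil a$ (which is why $\omega_1=\ceil a-a$ and the $\ceil a$ offsets occur), assembles a new quadratic sum of length $q=\floor{a+2bK}$. The portions of the deformed contour that do not sweep across a stationary point are straight rays; written over $\R_{\ge0}$ these are exactly $\I_{C_7}$, $\I_{C_{9H}}$, $\I_{C_{9E}}$ and $\I_{\tilde C_1}$, the various $e^{\pm i\pi(j+1)/4}$ and $i$‑power rotations recording each ray's direction, while the extreme frequencies near $n=\ceil a$ and near $n=a+2bK$, together with the fractional parts $\omega=\frc{a+2bK}$ and $\omega_1$, give the boundary integrals $\J_1$, $\J_2$ and the discrete correction $-\delta_{\ceil a-1}v'_0$.

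The constants $A_{j,\ell}$, $B_{j,k}$, $z_j$, $z'_j$ come from carrying the polynomial factor $P(z/K)$ through the Gaussian integrals. Expanding each monomial about the relevant saddle, $z^\ell=\sum_{j}\binom{\ell}{j}z_0^{\ell-j}(z-z_0)^j$, and evaluating the Gaussian moments $\int(z-z_0)^j e^{-cz^2}\,dz$ gives a double sum; collecting powers of $\pi$, $2$, $e^{i\pi/4}$, $a$, $b$, $K$, $q$ yields precisely the closed forms for $A_{j,\ell}$ and $B_{j,k}$, after which resumming over $n$ reassembles a genuine quadratic sum $\mathbf{F}(q,\v';a',b')$ with $\v'$ as stated. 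Since \cite{hiary-2} performs exactly this computation for $\v=(0,\dots,0,1)$ in its Equations (3.37)--(3.39), the remaining work is to substitute $P=\sum_\ell v_\ell x^\ell$, apply the $\ell$‑th monomial formula term by term, and interchange the order of the resulting double sums over $\ell$ and over the saddle‑expansion index $j$; this interchange is what converts the raw coefficients into the combinations $z_j=i^j\sum_{\ell\ge j}v_\ell\binom{\ell}{j}$ and $z'_j=\sum_{\ell\ge j}v_\ell\binom{\ell}{j}2^{(\ell+1)/2}e^{i\pi(\ell+1)/4}$ appearing in the $S_j$, the two shapes corresponding to the two families of contour rays (with and without the extra $2^{1/2}e^{i\pi/4}$ factor).

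There is no new analytic ingredient beyond Poisson summation, contour deformation, and the evaluation of Gaussian integrals, so the entire difficulty is bookkeeping. The main obstacle is to track without error every phase factor (the $e^{\pm i\pi(j+1)/4}$, $i^j$, $(-1)^{(k+j)/2}$, $e^{-i\pi a^2/(2b)}$), every shift in the dual index (the appearances of $\ceil a$, $\omega$, $\omega_1$), and the precise endpoints of every sum and integral, so that the boundary terms $v_0/2$, $\frac{1}{2}\sum_j v_j$, and $-\delta_{\ceil a-1}v'_0$ land exactly as written and the reorganized double sum matches the stated closed forms for $\v'$ and $S_j$ term by term. Checking this agreement, most delicately the consistency of the $\ceil a$‑offset in the dual sum with the $\I_{\tilde C_1}$ and $\J_1,\J_2$ contributions, is where essentially all the effort lies.
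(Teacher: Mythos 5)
Your proposal matches the paper's treatment: the paper offers no independent proof of Proposition \ref{proposition-theta-sum}, but states that it is essentially Equations (3.37)--(3.39) of \cite{hiary-2} (the Poisson-summation/contour argument for a single monomial weight), extended to general $\v$ by linearity and an interchange of summation that produces the coefficients $z_j$ and $z'_j$ --- which is exactly the reduction you describe. Your additional sketch of the underlying saddle-point/Gaussian-integral bookkeeping is consistent with the paper's high-level description of the algorithm, so this is the same approach.
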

    
Some remarks:
\begin{itemize}
 \item The formula in
 Proposition~\ref{proposition-theta-sum}  takes on a simpler
 form if $J=0$. For example, 
 $$z_0=v_0, \quad z'_0=(1+i)v_0,\quad 
 v'_0=\frac{e^{\frac{\pi i}{4} -\frac{i\pi a^2}{2b}}}{\sqrt{2b}}  v_0.$$
  \item The integral $\J_2$ occurs from certain ``boundary'' terms in the
computation, while the bulk of the contribution to the sum generally comes from
the $\J_1$.
    \item
    As might be expected, and as can be seen in the formula for $S_j$, for
    large $K$ $\mathbf{F}(K,\v; a, b)$ is extremely sensitive to small
    perturbations in $a$ and $b$. 
    This means that we require some sort of
    high precision computation even if we only want a moderate precision
    answer.  A feature of this formula, however, is that it isolates a
    small number of terms which require high precision computation (namely,
    a few exponentials and $a'$ and $b'$).

    \item
    In practice it is better to compute at least some of these integrals
    simultaneously for all $j$. In particular, our methods of computation
    for the $\J$ terms are only weakly dependent on $j$ (e.g. we use Taylor
    expansions to approximate the integrals as polynomials in $t$, and the
    same terms arise many times as integrands). Our current methods to
    compute the $\I$ terms do not have this feature and are not as amenable
    to simultaneous computation, but this is a possible spot for future
    optimization. Another optimization is to precompute 
    the $\J$ and (some of the) $\I$ integrals instead of
    computing them on the fly.

    \item
    If $\omega$ is not too small (compared to the target precision) then the terms
    involving $\I_{C_{9E}}$ and $\I_{\tilde C_1}$ are not large enough to make
    any significant contribution, so we do not need to compute them. This is
    usually the case, and can easily be detected, so on average the computation
    of these terms adds almost nothing to the running time of the algorithm.
\end{itemize}

\section{Simple multi-evaluation}

Thus far we have only described how to evaluate $\zeta(1/2 + it)$ at a
single point. To locate zeros and make plots of zeta, we of
course want to evaluate at more points. We note that the sum
$\mathcal{M}(t)$, which consumes almost all the computation time, 
is a bandlimited function with highest frequency $\tau = \log
\lfloor \sqrt{t/2\pi}\rfloor$;  see \cite{odlyzko-manuscript}.
So  we can use interpolation to
recover $\mathcal{M}(t)$ for any $t$ in a small window if we have already evaluated
it on a relatively coarse grid of points covering a slightly larger window.
Such a grid consists of points spaced $\pi/\beta$ apart, with 
$\beta > \tau$.
In light of this, we focus first on the problem of
computing $\mathcal{M}(t_0 + \delta j)$ for a range of integers $j$, with $t_0$ large and
$\delta j \ll 1$. For concreteness, we might imagine that 
$10^{24} < t_0$,
$\delta = .04$, and $0 \le j \le 1000$, which corresponds to the evaluation
of $\zeta(1/2 + it)$ in a window of size $40$. To further simplify matters, 
we avoid the vicinity of $t_0$ where the length of $\mathcal{M}(t)$
changes; i.e. $t_0$ of the form  $2\pi n^2$ for some integer $n$. 

We perform the multi-evaluation on each block, which we recall have the shape
\begin{align}\label{simple multi eval}
    \sum_{n=v}^{v + K - 1} \frac{e^{i(t_0+\delta j)\log n}}{\sqrt{n}} 
               = \frac{e^{i(t_0+\delta j)\log v}}{\sqrt{v}} 
               \sum_{k=0}^{K - 1} \frac{e^{i(t_0+\delta j)\log (1 +
               k/v)}}{\sqrt{1+k/v}}.
\end{align}
The entirety of our multi-evaluation is based on the simple observation that in
factoring out the first term in this sum we have removed most of the
oscillation from the summands. 
So the inner sum in \eqref{simple multi eval} changes little
with $j$, and it suffices to approximate by 
its value at $t_0$ only.

Let us denote the inner sum in \eqref{simple multi eval} 
by $V(t,v,K)$. 
Then using the inequality $\log(1+x)\le x$ for $0\le x\le 1$,  
and the bound  $(K-1)/v\le 1/u_0$,
we obtain
\begin{equation}
|V(t_0+\delta j,K,v)-V(t_0,K,v)|\le 
\frac{\rho \delta j}{u_0} \max_{0\le \Delta < K} 
\left|\sum_{k=\Delta}^{K-1} e^{it_0\log (v+k)}\right|,
\end{equation}
where $\rho>1$ depends on $t_0$. 
Given the range of $t_0$ under consideration, 
we can show that $\rho=1.1$ is admissible. So, summing over all blocks, 
the total error in the multi-evaluation method is at most
$\rho \delta j u_0^{-1} \mathcal{M}_{\max}(t_0)$.
Now, $\mathcal{M}_{\max}(t_0) \le A t_0^{1/6}\log t_0$ 
for some constant $A$ (see Lemma~\ref{trunc error lemma 2}),
and by the main result in \cite{hiary-corput}
we should be able to take $A \le 1$.
Moreover, we have $u_0\ge \alpha t^{1/3}$ where we took $\alpha = 1/0.9$ in 
our computations. Thus, the error in the multi-evaluation is 
\begin{equation}\label{total multi eval error}
\le \frac{\rho A \delta j\log t_0}{\alpha t_0^{1/6}} \le \frac{\delta j \log
t_0}{t_0^{1/6}}.
\end{equation}
In light of this, the multi-evaluation can be carried out
safely over a large range of $j$ (of length $t_0^{1/6-\epsilon}$). 
In practice, the  estimate \eqref{total multi eval error} is 
 conservative because the estimate for $\mathcal{M}_{\max}(t_0)$ 
 is wasteful. 
Almost surely the cumulative error
will be significantly below the maximal size \eqref{total multi eval error},
and is much closer to  
\begin{equation}
\le \frac{\delta j \sqrt{\log(N/v_0)}}{t_0^{1/3}}.
\end{equation}

\section{Our implementation}\label{section-implentation}

\begin{figure}
    \makebox[\textwidth][c]{\includegraphics[scale=.9]{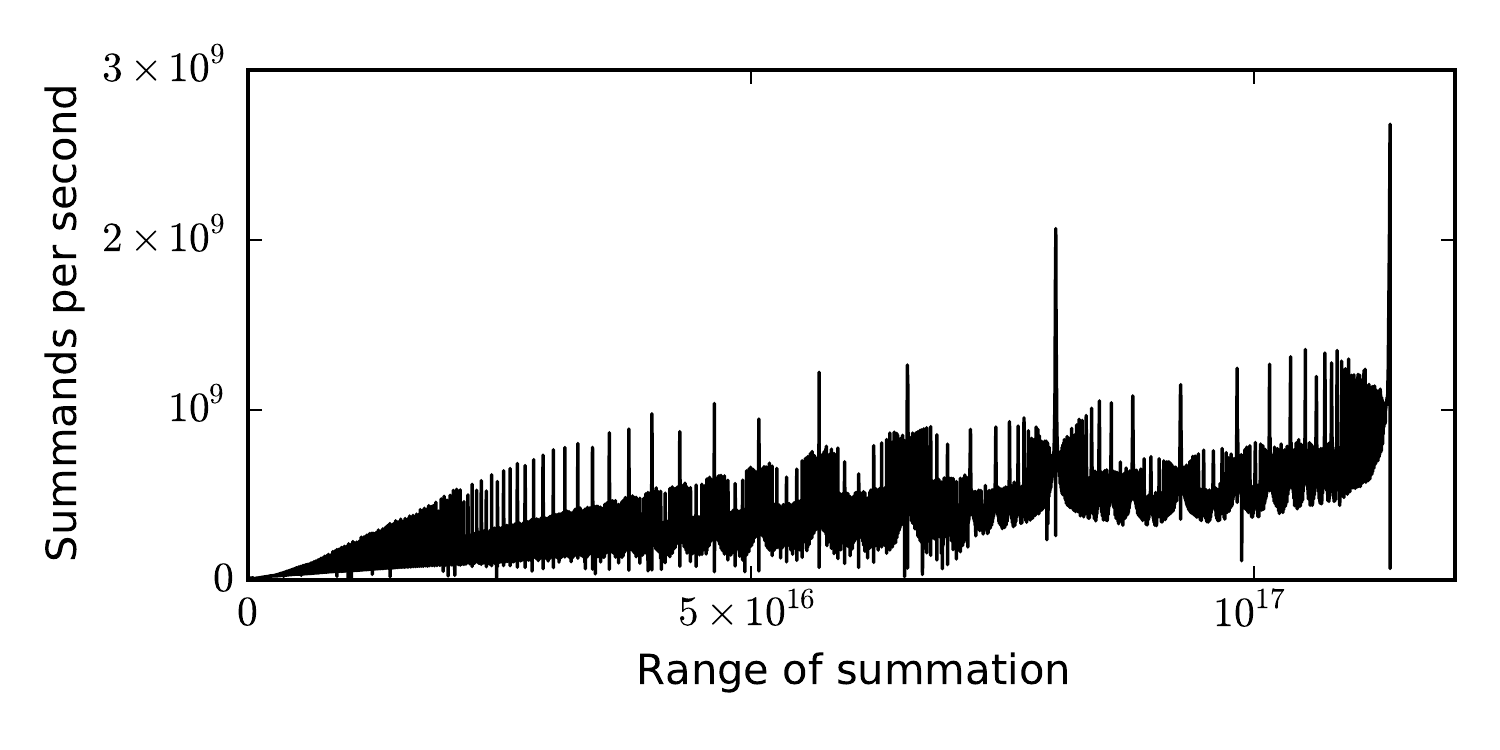}}
    \caption{The computation at the largest height was split into over 130000 independent
        blocks of computation, each computing a consecutive piece of the main sum. Here
        we show the ``speed'' of our implementation in terms of the number of summands
        per second summed in each block. We can see here that the diophantine properties
        of the input affect the running time of the theta algorithm; for example, the
        largest spikes correspond to summands $n^{-1/2 + it}$ where $t/(4\pi n^2)$ is
        very close to an integer.
    }
    \label{figure-computation-speed}
\end{figure}

In this section, we document our implementation of the $t^{1/3+o(1)}$ algorithm.
We remark that due to special 
features of the theta algorithm in \cite{hiary-2} (e.g.\ strong links between the running time
and number-theoretic properties of the inputs), there
are several spots of 
optimization, but we have explored only few of them.

\subsection{Quadratic exponential sums} \label{subsection theta implementation}
We have a general implementation of the algorithm described by
Proposition \ref{proposition-theta-sum}, focused on moderate precision output
for general input. The entirety of our \verb!C++! code currently consists of a bit over 3000 lines,
though some of that is for precomputed tables and constants that are produced
quickly.
It was quite useful during the implementation to constantly compare answers
obtained from the \verb!C++! 
code with answers obtained from a basic version of the
algorithm that was implemented in \verb!Mathematica!.
The end result is a single function which computes the sum $F(K,\v; a,
b)$ to a specified ``attempted'' absolute precision $\epsilon$. By this we
mean that if our implementation were bug-free, and if our subroutines employ
sufficient working precision consistently (as they hopefully should), 
then we would compute the answer to within $\epsilon$.  

In practice, floating point arithmetic is not exact, and we limit our use of
high precision arithmetic (using \verb!MPFR!) 
to small areas of the algorithm where it is completely
unavoidable. For example, if $K$ is very large then $F(K, \v; a, b)$ typically
oscillates rapidly as $b$ changes (on a scale like $1/K^2$), so it is necessary to
specify the value of $b$ sufficiently precisely.
For typical input (for example, with $J = 18$, $\v = (1, 1, \ldots, 1)$,
and $a$ and $b$ arbitrary) we
expect to be able to able to compute up to an absolute precision of around
$10^{-10}$ with our implementation, and we usually get over $40$ bits of relative precision.

There are also some other limitations in our implementation that come from our
use of \verb!C++! doubles. For example, there are numbers that occur in the
computation which may become too large or too small to be represented as double
precision numbers, but in practice our implementation will
usually fail before we reach this point. 
We hope, at least, that our
implementation will return {\tt NaN} in such a case, rather than returning a
wrong answer. 
These are technical limitations, however, and with some effort 
 they could be removed at little cost to the overall running time of our
implementation.

For efficient evaluation of the integrals arising in Proposition
\ref{proposition-theta-sum}, we cannot rely on straightforward numeric
integration. Instead we must deal with each integral on a case-by-case basis
and evaluate as appropriate. Many of the details are given in \cite{hiary-2},
and we do not include them all here, but as a typical example we explain here how one might
evaluate the integral
\[
    \mathcal{J} :=
\jbulk(\alpha, \beta, 0, M, 1) = \int_0^1 \exp(-2 \pi
\alpha t - 2 \pi i \beta t^2)
        \frac{1 - \exp(-2 \pi M t)}{\exp(2 \pi t)-1} \ud t.
\]
Here, $\alpha \in [0,1]$ and $\beta\in [0,1/4]$.
We can begin by replacing $\exp(-2 \pi i \beta t^2)$ by its Taylor series,
and get
\[
    \mathcal{J} = \sum_{r=0}^\infty \frac{(-2\pi i \beta)^r}{r!}
        \int_0^1 t^{2r} \exp(-\pi a t)\frac{1 - \exp(-2 \pi M t)}{\exp(2 \pi t)-1} \ud t.
\]
The integral inside the sum will be small, so we can truncate the infinite sum
after a small number of terms, and we now focus on this inner integral, in which
we can expand the geometric sum
\[
    \frac{1 - \exp(-2\pi M t)}{\exp(2 \pi t) - 1}
\]
to get that the inner integral equals
\[
    \sum_{m=1}^M \int_0^1 t^{2r} \exp(-2 \pi (\alpha + m) t) dt.
\]
This integral is now a fairly simple function, and we have a number
of choices for how to evaluate it. For example, we can evaluate it explicitly
using its antiderivative, or we can again use the Taylor expansion for the
exponential and integrate term-by-term, or we can write it in terms of an
incomplete gamma function and use a continued fraction expansion; these methods
each work well for different ranges of parameters.

This still does not completely solve the problem, as $M$ may be very large. To
deal with this, once $m$ is past a certain size we use a simple approximation
to the antiderivative of the integrand and Euler--Maclaurin summation to
compute the sum over the full range.

\subsection{The main sum}\label{subsection main sum implementation}
We compute the sum $\mathcal{M}(t) = \sum_{n=1}^N n^{-1/2 + it}$ in three
stages, as different methods of computation are appropriate for different sizes
of $n$. We write
$$    \mathcal{M}(t) = \M_1(t) + \M_2(t) + \M_3(t),$$
where
\[
    \M_1(t) = \sum_{n_0 \le n < n_1}\frac{n^{it}}{n^{1/2}},
    \ \ \ \ \M_2(t) = \sum_{n_1 \le n < n_2}\frac{n^{it}}{n^{1/2}},
    \ \ \ \ \M_3(t) = \sum_{n_2 \le n < n_3}\frac{n^{it}}{n^{1/2}},
\]
with $n_0 = 1, n_1 \asymp t^{1/4}, n_2 \asymp t^{1/3}$ and $n_3 = N$. 

In stage
1, we compute the sum $\mathcal{M}_1(t)$ directly. More specifically, we compute each term to
roughly 50 bits of relative precision, and add them up. This ensures that,
for the ranges of $t$ where we computed,
the roundoff error $|\mathcal{M}_1(t)-\fl(\mathcal{M}_1(t))|$
is far subsumed by the ``practical truncation error'' discussed 
in \textsection{\ref{zeta theta link}}. 

In stage 2 we still
add up each term in the sum individually, but gain efficiency by approximating
the exponent $(it-1/2)\log(1+k/v)$ in successive terms using a 
rapidly decaying 
Taylor series instead of computing 
it  directly using expensive multiprecision arithmetic for the
logarithm function. 
Another advantage to this approach is that, now, double precision arithmetic 
 suffices to 
to compute many of the terms in the Taylor expansion, and 
higher precision is only needed for the first few terms (the exact number of these
terms and the needed working precision are both determined by
the attempted absolute precision $\epsilon$ and using formulas 
coded in the implementation).
The overall effect is to ensure that 
the roundoff error from stage 2 
is again subsumed by the practical truncation error.

In stage 3 we approximate by quadratic exponential
sums as described in \textsection{\ref{zeta theta link}} and apply the 
theta algorithm
described in \textsection{\ref{section quadratic sums}}, 
passing 
a requested precision parameter $\varepsilon$ to the algorithm. 
In turn, $\varepsilon$ is passed to
subfunctions in our code in order to determine  
the needed working precision for each subfunction, and so on.
Significant effort was spent on 
sharpening formulas to calculate the working precision 
for each subfunctions. This enabled a numerically more robust 
implementation and facilitated the discovery of 
programming bugs. 
Actually, the precision of 
the theta algorithm in practice is much better than 
the requested precision $\varepsilon$. 
Here is a typical example. On a test
suite of $29952$ sums which might come from a computation of
$\zeta(1/2 + i10^{36})$, when the theta algorithm 
is called with a precision of $\varepsilon = 10^{-5}$, the worst
error is $5.361 \times 10^{-6}$ but the typical error is around
$7.45 \times 10^{-9}$, and the vast majority of the errors are
$< 3 \times 10^{-8}$. 

In summary, we expect the values of $Z(t)$ that we
computed to be accurate to within $\pm 10^{-7}$ typically, and
accurate to within $\pm 10^{-5}$ in the worst cases. The bulk of the error comes
from the practical truncation error discussed in \textsection{\ref{zeta
theta link}}. In comparison, the practical roundoff and multi-evaluation errors
are small.

\subsection{Example running time}
Figure \ref{figure-computation-speed} gives an indication of the speed of our
current implementation. We graph the number of terms per second computed
in large blocks of the main sum at the largest height computation we ran, in a
range around $t = 81029194732694548890047854481676712$. For this computation,
we used approximately 22.5 cpu-core-years on the BlueCrystal Phase 2 cluster at
the University of Bristol, which has 2.8 Ghz Intel Xeon E5462 cpus. Using just
our "stage 2" code, we estimate that the same computation would take around 300
core-years instead. Towards the end of the range the "stage 3" code often has a
speedup over "stage 2" by a factor of around 200.

\section{Proofs}\label{proofs}

\begin{proof}[Proof of Proposition~\ref{trunc error proposition}]
By definition
\begin{equation}
\epsilon_J(t) =  
\sum_{r=0}^R \frac{e^{it\log v_r}}{\sqrt{v_r}}
\sum_{j> J}c_r(j) F(K_r,j;a_r,b_r).
\end{equation}
Applying partial summation to $F(K_r,j;a_r,b_r)$ gives
\begin{equation}
|F(K_r,j;a_r,b_r)| \le \left(\frac{K_r-1}{K_r}\right)^jF_{\max}(K_r;a_r,b_r),
\end{equation}
where
\begin{equation}
F_{\max}(K;a,b)=\max_{0\le \Delta<K} 
\left|\sum_{k=\Delta}^{K-1} e^{2\pi i a k+2\pi i b k^2}\right|.
\end{equation}
In view of this, let us define
\begin{equation}\label{cmax Mmax}
\begin{split}
&c_{\max}(J,t) = \max_{0\le r\le R} \sum_{J< j} \left(\frac{K_r-1}{K_r}\right)^j |c_r(j)|,\\
&\mathcal{M}_{\max}(t) = \sum_{r=0}^R
\frac{F_{\max}(K_r;a_r,b_r)}{\sqrt{v_r}}.
\end{split}
\end{equation}
(The dependence of $c_{\max}(J,t)$ on $t$ is through the coefficients $c_r(j)$.)
By the triangle inequality then,
\begin{equation}\label{eps J}
|\epsilon_J(t)|\le c_{\max}(J,t)\mathcal{M}_{\max}(t).
\end{equation}
Now, recall from \eqref{cj def} that
\begin{equation}\label{trunc error lemma eq 0}
c_r(j) =\left(\frac{K_r}{v_r}\right)^j\frac{f_{s^*}^{(j)}(0)}{j!}.
\end{equation}
By construction $K_r= \lceil v_r/u_0\rceil$, hence,
$(K_r-1)/v_r\le 1/u_0$.
Also, if we write
\begin{equation}
\frac{f_{s^*}(z)}{j!}  = e^{-\frac{s^*z^3}{3}}\frac{e^{\frac{s^*z^3}{3}}f_{s^*}(z)}{j!}
= e^{-\frac{s^*z^3}{3}} \sum_{m=0}^{\infty} d_m z^m,
\end{equation}
then
\begin{equation}\label{trunc error lemma eq 1}
\left|\frac{f^{(j)}_{s^*}(0)}{j!}\right| \le \sum_{\substack{0\le m,h \\
m+3h=j}} \frac{|s|^h}{h!3^h} |d_m|,
\end{equation}
where, by Cauchy's theorem applied with a circle of radius
$\rho> 0$ around the origin,
\begin{equation}\label{trunc error lemma eq 02}
|d_m| \le \frac{1}{2\pi}
\left|\int_{|z|=\rho}\frac{e^{\frac{s^*z^3}{3}} f_{s^*}(z)}{z^{m+1}}dz\right|
\le  \frac{e^{\frac{\rho}{2}+\frac{\rho^2}{4}+|s|\sum_{\ell=4}^{\infty} \frac{\rho^{\ell}}{\ell}}}{\rho^m}.
\end{equation}
Choosing $\rho = 1/|s|^{1/4}$ therefore gives
\begin{equation}\label{trunc error lemma eq 2}
|d_m|\le  |s|^{m/4} e^{\lambda(s)},
\end{equation}
where
\begin{equation}
\begin{split}
\lambda(s) &\le  \frac{1}{2|s|^{1/4}}+\frac{1}{4|s|^{1/2}}+ 
\sum_{4\le \ell} \frac{|s|^{-\ell/4+1}}{\ell}\\
&\le \frac{1}{2|s|^{1/4}}+\frac{1}{4|s|^{1/2}}+
\frac{1}{4(1-|s|^{-1/4})}. 
\end{split}
\end{equation}
Combining \eqref{trunc error lemma eq 2}, \eqref{trunc error lemma eq 1},
\eqref{trunc error lemma eq 0},
and observing that $d_0=1$ and $|d_1|+|d_2|+|d_3|= 49/48$ (since they come 
exclusively from expanding $e^{-\frac{1}{2}(z-z^2/2)}$)
 we obtain (on treating the cases $m=0$, $1\le m\le 3$, and $4\le m$ separately) that 
\begin{equation}
\begin{split}
c_{\max}(J,t) & \le 
 \sum_{\frac{J}{3}< h} \frac{1}{u_0^{3h}}\frac{|s|^h}{h! 3^h}
 + \sum_{J< j}\frac{1}{u_0^j} 
  \sum_{\substack{1\le m \le 3,\, 0\le h\\ m+3h=j}} \frac{|s|^h}{h!3^h}|d_m|
+ e^{\lambda(s)}\sum_{J< j}\frac{1}{u_0^j} 
 \sum_{\substack{4\le m,\, 0\le h\\ m+3h=j}} \frac{|s|^{h+m/4}}{h!3^h}\\
&\le 
\sum_{\frac{J}{3}< h} \frac{1}{h!}\frac{1}{(3\alpha^3)^h} 
+ \frac{49}{48|s|^{1/3}\alpha}\sum_{\frac{J-1}{3}< h }
\frac{1}{h !}\frac{1}{(3\alpha^3)^h}\\
&+\frac{e^{\lambda(s)}}{|s|^{1/3}\alpha^4}\sum_{\frac{J-4}{3}< h }
\frac{1}{h !}\frac{1}{(3\alpha^3)^h}+
e^{\lambda(s)}\sum_{J< j} \frac{1}{\alpha^j}\sum_{\substack{5\le m,\, 0\le
h\\ m+3h=j}} \frac{1}{|s|^{m/12}}\frac{1}{h!}\frac{1}{3^h}\\
&\le
\Big[e^{1/3\alpha^3}-\sum_{0\le h \le \frac{J}{3}} 
\frac{1}{h!} \frac{1}{(3\alpha^3)^h}\Big] 
+\frac{49}{48|s|^{1/3}\alpha}\Big[e^{1/3\alpha^3}-\sum_{0\le
h \le \frac{J-1}{3}} \frac{1}{h!}\frac{1}{(3\alpha^3)^h}\Big]\\ 
&+\frac{e^{\lambda(s)}}{|s|^{1/3}\alpha^4}\Big[e^{1/3\alpha^3}-\sum_{0\le
h \le \frac{J-4}{3}} \frac{1}{h!}\frac{1}{(3\alpha^3)^h}\Big]
+\frac{e^{\lambda(s)+1/3}}{|s|^{5/12}\alpha^{J-1}(\alpha-1)}.
\end{split}
\end{equation}
The proposition now follows on substituting
into \eqref{eps J}
this estimate for $c_{\max}(J,t)$ and the bound for
$\mathcal{M}_{\max}(t)$ from Lemma~\ref{trunc error lemma 2}.
\end{proof}

\begin{lemma}\label{trunc error lemma 2}
Let $\alpha = 1/0.9$, $K_{\min} = 2000$.
If $v_0 = \lceil K_{\min} t^{1/3}\rceil \le \frac{1}{3} \sqrt{t/(2\pi)}$ and $u_0= \alpha
|s|^{1/3}\ge 1$, then
\begin{equation}
\begin{split}
\mathcal{M}_{\max}(t) \le
\sqrt{\frac{1}{u_0}+\frac{1}{v_0}}
&\left(\frac{\log\frac{t}{2\pi v_0^2}}{\log(1+\frac{1}{u_0})}+ 
\frac{t^{1/4}}{3\pi^{1/4}}\frac{\sqrt{2}}{\sqrt{u_0}}
+\frac{2t}{3\pi}\frac{\sqrt{2}}{\sqrt{u_0v_0}v_0} \right.\\
&\left.+\frac{\sqrt{t}}{\sqrt{\pi}}\frac{1}{v_0}
+2u_0+2u_0\log\frac{t}{\pi v_0^2}+5\right).
\end{split}
\end{equation}
\end{lemma}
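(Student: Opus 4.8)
The plan is to bound $\mathcal{M}_{\max}(t)=\sum_{r=0}^{R}F_{\max}(K_r;a_r,b_r)/\sqrt{v_r}$ block by block, by attaching to each block an explicit van der Corput estimate for the truncated quadratic exponential sum $F_{\max}(K_r;a_r,b_r)$, and then summing the resulting bounds using the geometric growth $v_{r+1}\ge v_r(1+1/u_0)$ built into \eqref{seq def}. First I would record a uniform estimate for partial quadratic sums. A shift of the summation index gives $\bigl|\sum_{k=\Delta}^{K-1}e^{2\pi i(ak+bk^2)}\bigr|=\bigl|\sum_{m=0}^{K-1-\Delta}e^{2\pi i((a+2b\Delta)m+bm^2)}\bigr|$, so $F_{\max}(K;a,b)$ is at most the supremum of $\bigl|\sum_{k=0}^{K'-1}e^{2\pi i(a'k+bk^2)}\bigr|$ over all $K'\le K$ and all real $a'$. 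Using $(k^2+k)/2\in\Z$ together with conjugation I may first normalize the quadratic argument to $0\le b\le 1/4$ (adjusting $a'$), and then combine the trivial bound $\le K'$, the elementary linear bound $\le\min\bigl(K',1/(2\|a'\|)\bigr)$ valid when $b$ is negligibly small, and the second-derivative test $\ll K'\sqrt{b}+1/\sqrt{b}$; where the latter is weak one applies the Poisson ($B$-process) transformation --- the $J=0$ case of Proposition~\ref{proposition-theta-sum} --- which, once $a'$ and $b$ are normalized, replaces the sum by one of length $\le 1+K'/2$ and coefficient $(2b)^{-1/2}$, plus the explicitly bounded $\I$ and $\J$ integrals, and iterates. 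The output is an explicit bound of the shape $F_{\max}(K_r;a_r,b_r)\ll\sqrt{K_r}+\sqrt{|b_r|}\,K_r+1/\sqrt{\|2b_r\|}+1$, in which the first term is the dominant one.

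Next I would insert the specific values $a_r=t/(2\pi v_r)$, $b_r=-t/(4\pi v_r^2)$, $K_r=\ceil{v_r/u_0}$. Two elementary observations do the bookkeeping. Since $K_r\le v_r/u_0+1$ we have $\sqrt{K_r/v_r}\le\sqrt{1/u_0+1/v_0}$ for every $r$, so each $\sqrt{K_r}$-type contribution to $\mathcal{M}_{\max}(t)$ is at most a fixed multiple of $\sqrt{1/u_0+1/v_0}$ per block; summing this against the count $R+1\le 1+\frac{\log(t/2\pi v_0^2)}{\log(1+1/u_0)}$ --- where the hypothesis $v_0\le\frac13\sqrt{t/2\pi}$ is what lets one replace $\log\bigl((N+1)/v_0\bigr)$ by $\log\frac{t}{2\pi v_0^2}$ --- yields the leading term $\frac{\log(t/2\pi v_0^2)}{\log(1+1/u_0)}$. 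Since $\sqrt{|b_r|}=\sqrt{t/4\pi}/v_r$, the $\sqrt{|b_r|}\,K_r$-type contributions are $\ll\sqrt t\,\bigl(u_0^{-1}v_r^{-1/2}+v_r^{-3/2}\bigr)$, and the remaining contributions are absolute constants per block. Summing over $r=0,\dots,R$ then reduces to standard geometric and telescoping estimates: the geometric bound $\sum_{r}v_r^{-1/2}\le v_0^{-1/2}\bigl(1-(1+1/u_0)^{-1/2}\bigr)^{-1}\le(2u_0+O(1))\,v_0^{-1/2}$, which after restoring the factor $\sqrt{1/u_0+1/v_0}$ produces the $u_0$ and $u_0\log\frac{t}{\pi v_0^2}$ terms; the telescoping inequality $K_r\le 2\sqrt{v_{r+1}}\bigl(\sqrt{v_{r+1}}-\sqrt{v_r}\bigr)$, whence $\sum_{r}K_rv_r^{-1/2}\le 2\sqrt{1+1/u_0+1/v_0}\,(\sqrt{v_{R+1}}-\sqrt{v_0})\ll t^{1/4}$, giving the $t^{1/4}$ term; and the analogous telescoping of $\sum_r v_r^{-3/2}$ and $\sum_r v_r^{-2}$, giving the $\frac{t}{v_0^{3}}$ and $\frac{\sqrt t}{v_0}$ terms. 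Collecting these and estimating the numerical constants crudely gives the asserted inequality.

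The main obstacle is the per-block van der Corput estimate with an explicit and acceptably small constant on the leading $\sqrt{K_r}$: the plain second-derivative test reaches $\sqrt{K_r}$ only when the normalized quadratic argument has size $\asymp 1/K_r$, so one must iterate the $B$-process and therefore track the accumulated remainder integrals and the product of the normalization factors $(2b)^{-1/2}$ through the iteration, as well as isolate the exceptional blocks --- those for which $t/(2\pi v_r^2)$ lies very close to an integer --- where only the trivial bound (or an Euler-Maclaurin estimate) is available and which account for the more exotic lower-order terms. Once that block estimate is in place, the remaining work --- the summation of the geometric and telescoping series and the numerical check that the constants match --- is lengthy but conceptually routine, given the standing hypotheses $v_0=\ceil{K_{\min}t^{1/3}}\le\frac13\sqrt{t/2\pi}$ and $u_0=\alpha|s|^{1/3}\ge 1$.
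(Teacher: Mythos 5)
Your outer structure (a per-block exponential-sum bound followed by summation over $r$ using the geometric growth $v_{r+1}\ge v_r(1+1/u_0)$ and the block count $R+1$) matches the paper, and your derivation of the leading term $\log(t/2\pi v_0^2)/\log(1+1/u_0)$ is essentially the paper's. But there is a genuine gap at the step you yourself flag as ``the main obstacle,'' and it is not quite the one you identify. The paper never iterates the $B$-process: it applies the Weyl--van der Corput lemma once, with shift $M=K_r$, followed by the Kusmin--Landau lemma, to get $F_{\max}(K_r;a_r,b_r)\le\sqrt{4K_r+2K_r\min(1/\pi\|2b_r\|,K_r)}$. The essential feature of this bound --- which your proposed per-block estimate $\sqrt{K_r}+\sqrt{|b_r|}\,K_r+1/\sqrt{\|2b_r\|}+1$ lacks --- is the capped term $\sqrt{K_r\min(1/\|2b_r\|,K_r)}$, which degenerates to the trivial bound $K_r$ precisely when $2b_r$ is near an integer. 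Your list of contributions to be summed over $r$ (the geometric sum $\sum_r v_r^{-1/2}$ and the telescoping sums $\sum_r K_rv_r^{-1/2}$, $\sum_r v_r^{-3/2}$, $\sum_r v_r^{-2}$) contains no term depending on $\|2b_r\|$, so the near-resonant blocks are simply not accounted for; ``isolate the exceptional blocks'' is not an argument, since a priori many consecutive blocks could have $2b_r$ close to integers and each would then contribute its full trivial bound.

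The missing idea is a spacing/counting argument: the sequence $2b_r=-t/(2\pi v_r^2)$ is monotone, and within each window $I_\ell=\{r:\,2|b_r|\in[\ell-1/2,\ell+1/2)\}$ consecutive values are separated by at least $\delta_\ell\ge(2\ell-1)/u_0$. Hence $\sum_{b_r\in I_\ell}\sqrt{\min(1/\|2b_r\|,\pi K_\ell^*)}\le\sqrt{2K_\ell^*}+2/\delta_\ell$: at most one block per window can be truly resonant (giving, after summation over $\ell\le\ell^*\asymp t/(2\pi v_0^2)$, the $t^{1/4}/\sqrt{u_0}$ and $t/(\sqrt{u_0v_0}\,v_0)$ terms in the statement), and the rest is controlled by $\sum_\ell 2/\delta_\ell\le 2u_0+u_0\log(t/\pi v_0^2)$ --- which is the true source of the $2u_0+2u_0\log\frac{t}{\pi v_0^2}$ terms that you instead attribute to $\sum_r v_r^{-1/2}$. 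Without this counting step the asserted inequality does not follow from what you have written. Separately, the route you propose for the per-block bound (iterating Proposition~\ref{proposition-theta-sum} with explicit tracking of all remainder integrals and normalization factors) is far heavier than needed and is not actually carried out; a single Weyl differencing step suffices and is what the paper uses.
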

\begin{proof}
Recall that
\begin{equation}
\mathcal{M}_{\max}(t) = \sum_{r=0}^R
\frac{F_{\max}(K_r;a_r,b_r)}{\sqrt{v_r}}.
\end{equation}
To bound $F_{\max}(K_r;a_r,b_r)$, 
we use the Weyl-van der Corput
Lemma in \cite[Lemma 5]{cheng-graham}.
This lemma gives rise to a certain geometric sum, 
which is in turn bounded 
using the Kusmin--Landau Lemma in \cite[Lemma 2]{cheng-graham}. 
This gives for each positive integer $M$,
\begin{equation}\label{trunc error lemma 2 eq 0}
\begin{split}
F_{\max}(K_r;a_r,b_r)^2 \le &  (K+M)\left( 
\frac{K_r}{M}+\min(1/\pi \|2 b_r\|+1,K_r) \right),
\end{split}
\end{equation}
where $\|x\|$ is the distance to the nearest integer to $x$. 
Choosing $M=K_r$ yields 
\begin{equation}
F_{\max}(K_r;a_r,b_r) \le \sqrt{4K_r + 2K_r\min(1/\pi\|2b_r\|,K_r)}.
\end{equation}
We partition $\{b_r\}_{r=0}^R$ into subsets
$I_{\ell} =\{b_r\,:\, 2|b_r|\in  [\ell-1/2,\ell+1/2), \, 0\le r \le R\}$.
So $\mathcal{M}_{\max} = \sum_{\ell} \mathcal{M}_{\ell}$ where  
\begin{equation}\label{trunc error lemma 2 eq 3}
\mathcal{M}_{\ell}=
\sum_{b_r \in I_{\ell}} \sqrt{K_r/v_r}\sqrt{4+2
 \min(1/\pi \|2b_r\|,K_r)}.
\end{equation}
Let $K_{\ell}^*$ denote the maximum block length $K_r$ associated
with a subset $I_{\ell}$. Analogously define $v_{\ell}^*$ to be the
maximum such $v_r$.
Then appealing to the inequality 
\begin{equation}
K_r/v_r \le \sqrt{1/u_0+1/v_0},
\end{equation}
and the bound 
$\sqrt{x+y}\le \sqrt{x}+\sqrt{y}$ (valid for $x,y\ge 0$), 
we obtain
\begin{equation}\label{trunc error lemma 2 eq 4}
\mathcal{M}_{\ell}\le
\sqrt{1/u_0+1/v_0}\left(2|I_{\ell}|+ \sqrt{2/\pi}
M^*_{\ell}\right)
\end{equation}
where  $|I_{\ell}|$ is the cardinality of $I_{\ell}$ and
\begin{equation}
M_{\ell}^*  = 
\sum_{b_r \in I_{\ell}} \sqrt{\min(1/\|2b_r\|,\pi K^*_{\ell})}.
\end{equation}
We bound $\mathcal{M}_{\ell}^*$ 
in terms of the minimum distance between distinct points 
$b_r,b_{r'}\in I_{\ell}$,
\begin{equation}
\delta_{\ell} = \min_{\substack{b_r,b_{r'} \in I_{\ell}\\b_r\ne b_{r'}}}
|2b_r-2b_{r'}|.
\end{equation}
To this end, proceed
similarly to the proof of \cite[Lemma 3.1]{hybrid-dirichlet} (starting 
with Equation (25) there and using the monotonicity of the $b_r$) to obtain
\begin{equation}\label{trunc error lemma 2 eq 7}
\mathcal{M}_{\ell}^* \le \sum_{0\le w \le 1/2\delta_{\ell}}
\sqrt{2\min\left(1/w\delta_{\ell}, K_{\ell}^*\right)}
 \le \sqrt{2K_{\ell}^*} + 2/\delta_{\ell}. 
\end{equation}
The last inequality follows on 
isolating the term in the sum corresponding to $w=0$, fixing the min 
to be $1/w\delta_{\ell}$ in the remainder of the sum, and 
estimating that by an integral.

At this point, we observe that if 
$\ell < t/2\pi v_R^2-1/2$ or $\ell>t/2\pi v_0^2+1/2$, 
then $I_{\ell}$ is empty. In view of this, 
and since $v_R\le \sqrt{t/2\pi}$, 
 we may restrict the range of summation in 
 \eqref{trunc error lemma 2 eq 3} to $1\le \ell \le \ell^* =t/2\pi
v_0^2+1/2$. 
So, substituting \eqref{trunc error lemma 2 eq 7} into \eqref{trunc error lemma 2 eq 4}, 
 summing over $\ell$, 
and using the obvious formula $\sum_{\ell} |I_{\ell}| = R+1$,
we obtain
\begin{equation}\label{trunc error lemma 2 eq 9}
\mathcal{M}_{\max}(t) \le 
\sqrt{1/u_0+1/v_0}\left(2R+2 + \sum_{1\le \ell\le \ell^*} 
\sqrt{2K_{\ell}^*} +\sum_{1\le\ell\le \ell^*} 2/\delta_{\ell}\right).
\end{equation}
Now, a simple calculation shows that
\begin{equation}
\frac{t}{\pi(2\ell+1)}\le v_r^2 \le \frac{t}{\pi(2\ell -1)},\qquad (b_r\in
I_{\ell}). 
\end{equation}
Hence,
\begin{equation}\label{trunc error lemma 2 eq 10}
v_{\ell}^* \le \frac{\sqrt{t}}{\sqrt{\pi(2\ell-1)}},\qquad\textrm{and so}\qquad
K_{\ell}^*\le \frac{\sqrt{ t}}{\sqrt{\pi(2\ell-1)}}\frac{1}{u_0}+1. 
\end{equation}
So, using the inequality $\sqrt{x+y}\le \sqrt{x}+\sqrt{y}$ once
again, we obtain 
\begin{equation}\label{trunc error lemma 2 eq 11}
\sum_{1\le \ell\le \ell^*} \sqrt{2K_{\ell}^*} \le 
\frac{t^{1/4}}{\pi^{1/4}}\frac{\sqrt{2}}{\sqrt{u_0}}
\left(\frac{1}{3}+ \frac{2}{3} \frac{t^{3/4}}{\pi^{3/4}}\frac{1}{v_0^{3/2}}\right)
+\frac{\sqrt{t}}{\sqrt{\pi} v_0}+1. 
\end{equation}
Here, we additionally estimated $\sum_{1\le \ell \le \ell^*}
1/(2\ell-1)^{1/4} \le 1/3+(2/3)(2\ell^*-1)^{3/4}$, which follows on
 isolating the term corresponding to $\ell=1$ and bounding the rest 
by an integral.
Furthermore, since the sequence $\{b_r\}_{r=0}^R$ is monotonically increasing),
then
\begin{equation}\label{trunc error lemma 2 eq 12}
\delta_{\ell} = \min_{b_r,b_{r+1} \in I_{\ell}} (2b_{r+1}-2b_r)
=\min_{b_r,b_{r+1} \in I_{\ell}} \frac{t}{2\pi}
\frac{K_r(v_{r+1}+v_r)}{v_r^2v_{r+1}^2}.
\end{equation}
Thus, using the inequalities $K_r/v_r \ge 1/u_0$ and 
$v_{r+1} \le v_{\ell}^*$, we arrive at the lower bound
$\delta_{\ell} \ge (2\ell-1)/u_0$. Consequently, 
as $\sum_{1\le \ell\le \ell^*} 1/(2\ell-1) \le
1+ \frac{1}{2}\log(2\ell^*-1)$, we obtain
\begin{equation}\label{trunc error lemma 2 eq 13}
\sum_{1\le \ell \le \ell^*} 2/\delta_{\ell} \le
2u_0+ u_0\log(t/\pi v_0^2).
\end{equation}
Last, a routine application of induction (see \cite[Lemma
3.1]{hiary-simple-alg}) gives
\begin{equation}\label{trunc error lemma 2 eq 14}
R \le \frac{\log(\sqrt{t/2\pi}/v_0)}{\log(1+1/u_0)}+1.
\end{equation}
The claim follows on substituting
\eqref{trunc error lemma 2 eq 14},
\eqref{trunc error lemma 2 eq 13},
and \eqref{trunc error lemma 2 eq 11}
into \eqref{trunc error lemma 2 eq 9}.
\end{proof}

\begin{lemma}\label{theta phase lemma}
If $t > 1$, then
\begin{equation}
\left|\theta(t) -\left(\frac{t}{2} \log \frac{t}{2\pi e}
-\frac{\pi}{8}+\frac{1}{48t}\right)\right|\le 
\left(\frac{49}{640}+\frac{3\zeta(4)}{2\pi^3}\right)\frac{1}{t^3}.
\end{equation}
\end{lemma}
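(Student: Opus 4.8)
The plan is to start from the standard exact formula for the phase $\theta(t)$ in terms of the log-gamma function, namely
\[
\theta(t) = \im \log \Gamma\!\left(\frac{1}{4} + \frac{it}{2}\right) - \frac{t}{2}\log \pi,
\]
and then apply Stirling's formula with an explicit remainder. Concretely, Stirling's series for $\log\Gamma(z)$ reads
\[
\log\Gamma(z) = \left(z - \tfrac12\right)\log z - z + \tfrac12\log(2\pi) + \sum_{k=1}^{m-1}\frac{B_{2k}}{2k(2k-1)z^{2k-1}} + \rho_m(z),
\]
where the Bernoulli-number terms contribute the main part of the expansion and $\rho_m(z)$ is the tail. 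I would take $m$ large enough that the truncated series produces exactly the displayed approximation $\frac{t}{2}\log\frac{t}{2\pi e} - \frac{\pi}{8} + \frac{1}{48t}$ after taking imaginary parts and substituting $z = \frac14 + \frac{it}{2}$; in particular the constant $-\pi/8$ arises from the imaginary part of $(z-\tfrac12)\log z - z$ at the leading order, and the $\frac{1}{48t}$ term comes from the $k=1$ Bernoulli term $\frac{B_2}{2z} = \frac{1}{12z}$, whose imaginary part at $z = \frac14 + \frac{it}{2}$ is $-\frac{1}{12}\cdot\frac{t/2}{1/16 + t^2/4}$, which I would expand as $-\frac{1}{6t} + O(1/t^3)$ — so in fact one must be careful: the $1/(48t)$ as written already absorbs part of this, so I would match term-by-term which Bernoulli terms land in the stated approximation and which get pushed into the error. (A cleaner route: expand $\im\log\Gamma(\frac14 + \frac{it}{2})$ directly in an asymptotic series in $1/t$ and identify the first three terms with the bracketed expression, showing the remainder is the $k\ge 2$ tail plus the tails of the $1/t$-expansions of the lower Bernoulli terms.)

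The key steps, in order, are: (1) record the exact identity $\theta(t) = \im\log\Gamma(\frac14+\frac{it}{2}) - \frac{t}{2}\log\pi$; (2) substitute $z = \frac14 + \frac{it}{2}$ into Stirling with remainder through the $k=1$ (or $k=2$) term, take imaginary parts, and carefully expand each resulting expression in powers of $1/t$; (3) verify algebraically that the sum of the terms of order $t\log t$, $t$, $1$, and $1/t$ equals precisely $\frac{t}{2}\log\frac{t}{2\pi e} - \frac{\pi}{8} + \frac{1}{48t}$; (4) bound what remains. The remainder splits into two pieces: the Stirling tail $\rho_m(z)$, for which I would use the classical bound $|\rho_m(z)| \le \frac{|B_{2m}|}{2m(2m-1)|z|^{2m-1}}\cdot(\text{a factor depending only on }\arg z)$, valid in the right half-plane and in particular along $\arg z$ bounded away from $\pm\pi/2$ (which holds since $\re z = 1/4 > 0$); and the tails of the finite-$1/t$ expansions of the retained Bernoulli terms, which are elementary geometric-type estimates. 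The constant $\frac{49}{640} + \frac{3\zeta(4)}{2\pi^3}$ should emerge from optimizing these bounds at $m=2$: the $\frac{3\zeta(4)}{2\pi^3}$ factor strongly suggests it comes from the $k=2$ Stirling term $\frac{B_4}{12 z^3} = -\frac{1}{360 z^3}$ together with a Hermite-type integral representation of the remainder whose bound involves $\sum_{n\ge 1} n^{-4} = \zeta(4)$, while $\frac{49}{640}$ is the contribution of expanding $\im\frac{1}{12z}$ to third order and collecting the $1/t^3$ coefficient.

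The main obstacle I expect is step~(4): getting a fully explicit and reasonably sharp bound on the log-gamma remainder that is valid for all $t > 1$ (not just asymptotically) and that, when combined with the lower-order expansion tails, produces exactly the stated constant rather than something larger. The difficulty is that the cleanest Stirling remainder bounds are stated for $\re z$ large or $|\arg z|$ small, whereas here $z$ runs up the line $\re z = 1/4$, so $|\arg z| \to \pi/2$; I would handle this by using the Binet integral representation $\mu(z) = \log\Gamma(z) - (z-\tfrac12)\log z + z - \tfrac12\log(2\pi) = \int_0^\infty \frac{B_2(\{u\}) - B_2}{2}\cdot\frac{\ud u}{(u+z)^2}$ (or the corresponding higher-order version with $B_4$), taking the imaginary part under the integral sign and bounding $|u+z|^{-k}$ in terms of $u + 1/4$ and $t$. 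Because $\re z = 1/4$ is fixed and positive, these integrals converge uniformly and the resulting bound decays like $t^{-3}$; the $\zeta(4)$ enters when one expands $(u + \frac14 + \frac{it}{2})^{-3}$ or bounds the Fourier series of $B_4(\{u\})$. Once that explicit $t^{-3}$ bound is in hand, combining it with the elementary tails from step~(2) and checking that the total does not exceed $\frac{49}{640}+\frac{3\zeta(4)}{2\pi^3}$ is a routine, if slightly tedious, verification.
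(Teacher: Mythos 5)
Your proposal follows essentially the same route as the paper's proof: the exact identity $\theta(t) = \im\log\Gamma(1/4+it/2) - \tfrac{t}{2}\log\pi$, Stirling's formula with a Binet-type integral remainder (the paper starts from the $B_1(\{x\})$ integral and integrates by parts up to $B_4$, equivalent to the representation you propose), term-by-term matching of the $1/t$-expansion (the $\tfrac{1}{48t}$ indeed arises as $\tfrac{3}{16t}-\tfrac{1}{6t}$), and a final bound on the $B_4(\{x\})$ integral via $|B_4(\{x\})|\le 2\cdot 4!\,\zeta(4)/(2\pi)^4$ and $\int_0^\infty |x+s/2|^{-4}\,dx\le 2\pi/t^3$, which is exactly where the $3\zeta(4)/(2\pi^3)$ in the constant comes from. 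The explicit elementary estimates you defer as ``routine verification'' are precisely what the paper carries out, so the plan is correct and matches the paper's argument.
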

\begin{proof}
By definition, $\theta(t)$ is 
the variation in the argument of $\pi^{-s/2}\Gamma(s/2)$ as 
$s$ varies continuously 
along the line segments from $2$ to $2+it$ to $1/2+it$. Therefore
\begin{equation}\label{theta formula}
\theta(t) =-\frac{t}{2}\log \pi+ \im \log \Gamma(1/4+it/2).
\end{equation}
By Stirling's formula (see \cite[\textsection{4.42}]{titchmarsh2}) 
\begin{equation}
\log \Gamma(s/2) = (s/2-1/2)\log(s/2) -s/2 +\frac{1}{2}\log 2\pi -
\int_0^{+\infty}
\frac{B_1(\{x\})}{x+s/2}dx,
\end{equation}
where $B_1(x)=x-1/2$ is the first Bernoulli polynomial, and $\{x\}$ is the
fractional part of $x$. Now, $s=1/2+it$, and so
\begin{equation}
\im \left((s/2-1/2)\log (s/2)\right) =  
\frac{t}{2}\log|1/4+it/2|-\frac{1}{4}\arg(1/4+it/2).
\end{equation}
And it is routine to show that 
\begin{equation}
\begin{split}
&0\le \log \frac{t}{2} +\frac{1}{8t^2} - \log|1/4+it/2|  
\le \frac{1}{64 t^4},\\
&0\le \left(-\frac{\pi}{2}+\frac{1}{2t}\right) + \arg(1/4+it/2) 
\le \frac{1}{24 t^3}.
\end{split}
\end{equation}
Hence
\begin{equation}
\left|\left(\frac{t}{2}\log \frac{t}{2} + \frac{1}{16t} -\frac{\pi}{8}
+\frac{1}{8t}\right)
-\im \left((s/2-1/2)\log (s/2)\right)\right|\le \frac{7}{384t^3}. 
\end{equation}
Moreover, applying integration by parts twice gives
\begin{equation}
\begin{split}
\int_0^{+\infty}
\frac{B_1(\{x\})}{x+s/2}dx 
&= -\frac{1}{6s}+\frac{1}{2}\int_0^{+\infty}
\frac{B_2(\{x\})}{(x+s/2)^2}dx\\
&=-\frac{1}{6s}+\frac{1}{3}\int_0^{+\infty}
\frac{B_3(\{x\})}{(x+s/2)^3}dx.
\end{split}
\end{equation}
Now, $0\le \frac{1}{6t}+\im \left(\frac{1}{6s}\right) \le
\frac{1}{24t^3}$.
Therefore, 
applying integration by parts once more,
and using the estimates $|B_4(x)|\le \frac{2(4!)\zeta(4)}{(2\pi)^4}$ (see
\cite{rubinstein-computational-methods}) and 
$\int_0^{\infty} dx/|x+s/2|^4 \le \frac{2\pi}{t^3}$ yields 
\begin{equation}
\begin{split}
\left|\frac{1}{3}\int_0^{+\infty}
\frac{B_3(\{x\})}{(x+s/2)^3}dx\right|&\le 
\frac{1}{60|s|^3}+ \left|\frac{1}{4}
\int_0^{+\infty}
\frac{B_4(\{x\})}{(x+s/2)^4}dx\right|\\
&\le \left(\frac{1}{60}+ \frac{3\zeta(4)}{2\pi^3}\right)\frac{1}{t^3}.
\end{split}
\end{equation}
Put together
\begin{equation}
\left|\left( \frac{t}{2}\log \frac{t}{2e} - \frac{\pi}{8}+ \frac{1}{48t}
\right) - \im \log \Gamma(1/4+it/2)\right|
\le \left(\frac{49}{640}+\frac{3\zeta(4)}{2\pi^3}\right)\frac{1}{t^3}.
\end{equation}
The lemma follows on using this in \eqref{theta formula}.
\end{proof}

\bibliographystyle{amsplain}
\bibliography{zetaComp}
\end{document}